\providecommand{\U}[1]{\protect\rule{.1in}{.1in}}
\providecommand{\U}[1]{\protect\rule{.1in}{.1in}}
\providecommand{\U}[1]{\protect\rule{.1in}{.1in}}
\providecommand{\U}[1]{\protect\rule{.1in}{.1in}}
\newtheorem{theorem}{Theorem}[section]
\newtheorem{corollary}[theorem]{Corollary}
\newtheorem{proposition}[theorem]{Proposition}
\newtheorem{lemma}[theorem]{Lemma}
\theoremstyle{definition}
\newtheorem{remark}[theorem]{Remark}
\begin{document}
\title[Super-critical Hardy--Littlewood inequalities for multilinear forms]{Super-critical Hardy--Littlewood inequalities for multilinear forms}
\author[D. N\'u\~nez]{D. N\'{u}\~{n}ez--Alarc\'{o}n}
\address{Departamento de Matem\'{a}ticas\\
\indent Universidad Nacional de Colombia\\
\indent111321 - Bogot\'a, Colombia}
\email{danielnunezal@gmail.com and dnuneza@unal.edu.co}
\author[D. Paulino]{D. Paulino}
\address{Departamento de Matem\'{a}tica \\
Universidade Federal da Para\'{\i}ba \\
58.051-900 - Jo\~{a}o Pessoa, Brazil.}
\email{djair.paulino@ifrn.edu.br and djairpsc@hotmail.com}
\author[D. Pellegrino]{D. Pellegrino*}
\address{Departamento de Matem\'{a}tica \\
Universidade Federal da Para\'{\i}ba \\
58.051-900 - Jo\~{a}o Pessoa, Brazil.}
\email{pellegrino@pq.cnpq.br and dmpellegrino@gmail.com}
\thanks{*Supported CNPq Grant 307327/2017-5 and by Grant 2019/0014 Para\'{\i}ba State
Research Foundation (FAPESQ)}
\subjclass[2010]{47B37, 47B10, 11Y60}
\keywords{Multilinear forms; sequence spaces}

\begin{abstract}
The multilinear Hardy--Littlewood inequalities provide estimates for the sum
of the coefficients of multilinear forms $T:\ell_{p_{1}}^{n}\times\cdots
\times\ell_{p_{m}}^{n}\rightarrow\mathbb{R}$ (or $\mathbb{C}$) when
$1/p_{1}+\cdots+1/p_{m}<1.$ In this paper we investigate the critical and
super-critical cases; i.e., when $1/p_{1}+\cdots+1/p_{m}\geq1.$

\end{abstract}
\maketitle

\section{Introduction}

Littlewood's $4/3$ theorem assures that for $\mathbb{K}=\mathbb{R}$ or
$\mathbb{C}$, we have
\[
\left(  \sum_{j_{1}.j_{2}=1}^{n}\left\vert A(e_{j_{1}},e_{j_{2}})\right\vert
^{4/3}\right)  ^{3/4}\leq\sqrt{2}\left\Vert A\right\Vert
\]
for all positive integers $n$ and all bilinear forms $A:\ell_{\infty}%
^{n}\times\ell_{\infty}^{n}\rightarrow\mathbb{K}$, where as usual%
\[
\left\Vert A\right\Vert =\sup\left\{  \left\vert A(x,y)\right\vert :\left\Vert
x\right\Vert \leq1\text{ and }\left\Vert y\right\Vert \leq1 \right\}
\]
and $\ell_{p}^{n}$ denotes $\mathbb{K}^{n}$ with the $\ell_{p}$ norm; the
exponent $4/3$ cannot be improved (i.e., cannot be replaced by a smaller one).
Under an anisotropic viewpoint, the result can be generalized as follows (see
Theorem 5.1 in Pellegrino et al. 2017): the inequality
\begin{equation}
\left(  \sum_{j_{1}=1}^{n}\left(  \sum_{j_{2}=1}^{n}\left\vert A(e_{j_{1}%
},e_{j_{2}})\right\vert ^{a}\right)  ^{\frac{b}{a}}\right)  ^{\frac{1}{b}}%
\leq\sqrt{2}\left\Vert A\right\Vert \label{77}%
\end{equation}
holds for all $n$ whenever $a,b\in\lbrack1,\infty)$ satisfy
\[
\frac{1}{a}+\frac{1}{b}\leq\frac{3}{2}.
\]
Moreover, if $a,b\in\lbrack1,\infty)$ satisfy%
\[
\frac{1}{a}+\frac{1}{b}>\frac{3}{2},
\]
then (\ref{77}) is not possible, i.e., if%
\[
\left(  \sum_{j_{1}=1}^{n}\left(  \sum_{j_{2}=1}^{n}\left\vert A(e_{j_{1}%
},e_{j_{2}})\right\vert ^{a}\right)  ^{\frac{b}{a}}\right)  ^{\frac{1}{b}}\leq
C\left\Vert A\right\Vert ,
\]
then the constant $C$ must depend on $n$.

From now on, unless stated otherwise, the exponents involved in the
inequalities are positive and can be even infinity (in this case the
corresponding sum is replaced by the supremum). We also consider
$1/\infty:=0.$ The Hardy--Littlewood inequalities for bilinear forms were
conceived in 1934 by Hardy and Littlewood (see Theorem 5 in Hardy \&
Littlewood 1934), as a natural generalization of Littlewood's $4/3$
inequality. The results of the seminal paper of Hardy and Littlewood, in a
modern and somewhat more general presentation, can be summarized by the
following two theorems:

\begin{theorem}
\label{t1} (See Osikiewicz $\&$ Tonge 2001 and Aron et al. 2017) Let
$1<q\leq2<p$, with $\frac{1}{p}+\frac{1}{q}<1$. The following assertions are equivalent:

(a) There is a constant $C\geq1$ (not depending on $n$) such that%
\[
\left(  \sum_{j_{1}=1}^{n}\left(  \sum_{j_{2}=1}^{n}\left\vert A(e_{j_{1}%
},e_{j_{2}})\right\vert ^{a}\right)  ^{\frac{b}{a}}\right)  ^{\frac{1}{b}}\leq
C\left\Vert A\right\Vert
\]
for all bilinear forms $A:\ell_{p}^{n}\times\ell_{q}^{n}\rightarrow\mathbb{K}$
and all positive integers $n$.

(b) The exponents $a,b$ satisfy
\[
\left(  a,b\right)  \in\left[  \frac{q}{q-1},\infty\right)  \times\left[
\frac{1}{1-\left(  \frac{1}{p}+\frac{1}{q}\right)  },\infty\right)  .
\]
Moreover, the optimal constant $C$ is $1.$
\end{theorem}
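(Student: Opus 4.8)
The plan is to prove the two implications separately. The key structural observation, used for (b)$\Rightarrow$(a), is that the left-hand side of the inequality in (a) is non-increasing in both $a$ and $b$: enlarging $a$ shrinks each inner norm $\left(\sum_{j_2}|A(e_{j_1},e_{j_2})|^a\right)^{1/a}$, and enlarging $b$ shrinks the outer $\ell_b$-norm of the resulting sequence. Hence to prove sufficiency it is enough to establish (a) at the corner point $(a_0,b_0):=\left(\frac{q}{q-1},\frac{1}{1-(1/p+1/q)}\right)$, whence the estimate propagates (with the same constant) to the whole region of (b). For (a)$\Rightarrow$(b) it suffices to produce, separately, families of forms that break each of the two defining constraints $a\ge q'$ and $b\ge b_0$.

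For the necessity I would use two explicit extremal forms. To force $a\ge q'$, take the rank-one form $A(x,y)=x_1\sum_{j}\beta_j y_j$; then $A(e_{j_1},e_{j_2})=\delta_{1 j_1}\beta_{j_2}$, so only the row $j_1=1$ survives and, using $\|A\|=\|\beta\|_{q'}$, the inequality in (a) collapses to $\|\beta\|_a\le C\|\beta\|_{q'}$ for all $\beta$ and all $n$; testing $\beta=(1,\dots,1)$ gives $n^{1/a}\le C\,n^{1/q'}$ and hence $a\ge q'$. To force $b\ge b_0$, take the diagonal form $A(x,y)=\sum_i x_i y_i$; here $A(e_{j_1},e_{j_2})=\delta_{j_1 j_2}$, so the left-hand side equals $n^{1/b}$, while Hölder together with the finite-dimensional embedding $\|y\|_{p'}\le n^{1/p'-1/q}\|y\|_q$ (valid since $1/p+1/q<1$ forces $p'<q$) yields $\|A\|=n^{1-(1/p+1/q)}$, the constant vectors being extremal in both estimates. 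Then $n^{1/b}\le C\,n^{1-(1/p+1/q)}$ for all $n$ forces $b\ge b_0$.

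For the sufficiency I first recast the corner inequality in operator language. Associating to $A$ the operator $T_A:\ell_p^n\to\ell_{q'}^n=(\ell_q^n)^{*}$, $T_A(x)=(A(x,e_{j_2}))_{j_2}$, one has $\|T_A\|=\|A\|$ and $\|T_A e_{j_1}\|_{q'}=\left(\sum_{j_2}|A(e_{j_1},e_{j_2})|^{q'}\right)^{1/q'}$, so that at $(a_0,b_0)$ the claim becomes $\left(\sum_{j_1}\|T_A e_{j_1}\|_{q'}^{b_0}\right)^{1/b_0}\le\|T_A\|$ with $1/b_0=1/p'+1/q'-1$. This is exactly the linear Hardy--Littlewood matrix inequality (Theorem 5 in Hardy \& Littlewood 1934; see also Osikiewicz \& Tonge 2001), which I would reprove by duality: writing the outer $\ell_{b_0}$-norm as a supremum against $(t_{j_1})$ in the unit ball of $\ell_{b_0'}$ and each inner $\ell_{q'}$-norm as a supremum against $w^{(j_1)}$ in the unit ball of $\ell_{q}$, the problem reduces to bounding $\sum_{j_1,j_2}A(e_{j_1},e_{j_2})\,t_{j_1}w^{(j_1)}_{j_2}$ by $\|A\|$, the substance of the argument being that the admissible multipliers $t_{j_1}w^{(j_1)}_{j_2}$ have controlled norm in the relevant nuclear/tensor sense. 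Because $q\le 2\le p$ and $1/p+1/q<1$, no Khinchin-type averaging is needed and the constant remains $1$; the monotonicity remark then carries the estimate, still with constant $1$, to the whole region in (b).

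Finally, to see that $C=1$ is optimal I would exhibit a matching lower bound: the form $A(x,y)=x_1 y_1$ satisfies $\|A\|=1$ while the left-hand side of (a) equals $1$ for every admissible $(a,b)$, so no constant below $1$ can work. The genuinely delicate step, and the one I expect to be the main obstacle, is the sharp constant-$1$ proof of the corner (linear) inequality; the two necessity examples and the monotonicity reductions are routine by comparison.
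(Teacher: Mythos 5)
You should first note that the paper does not prove this theorem at all: it is quoted with attribution to Osikiewicz \& Tonge 2001 and Aron et al.\ 2017, so your proposal can only be measured against those standard arguments. The peripheral components of your plan are correct and routine, exactly as you say: the monotonicity of the left-hand side in $a$ and $b$ legitimately reduces sufficiency to the corner point $(a_0,b_0)=\bigl(\frac{q}{q-1},\frac{1}{1-(1/p+1/q)}\bigr)$; the rank-one form forces $a\geq q/(q-1)$; the diagonal form, with $\Vert A\Vert=n^{1-(1/p+1/q)}$ computed correctly via H\"older and the embedding $\ell_q^n\subset\ell_{p'}^n$ (valid since $p'<q$), forces $b\geq b_0$; and $A(x,y)=x_1y_1$ shows $C\geq1$.

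The genuine gap is the corner inequality itself, which you acknowledge is the main obstacle but for which you supply no argument: your duality reduction is circular. Bounding $\sum_{j_1,j_2}A(e_{j_1},e_{j_2})\,t_{j_1}w^{(j_1)}_{j_2}$ by $\Vert A\Vert$ for all $\Vert t\Vert_{b_0'}\leq1$ and $\Vert w^{(j_1)}\Vert_q\leq1$ amounts to asserting that the tensor $u=\sum_{j_1}t_{j_1}\,e_{j_1}\otimes w^{(j_1)}$ has projective norm at most $1$ in $\ell_p^n\otimes_\pi\ell_q^n$; but by the duality $(\ell_p^n\hat{\otimes}_\pi\ell_q^n)^{*}=\mathcal{B}(\ell_p^n\times\ell_q^n)$ together with H\"older, this tensor-norm statement is \emph{equivalent} to the inequality $\bigl(\sum_{j_1}\Vert T_Ae_{j_1}\Vert_{q'}^{b_0}\bigr)^{1/b_0}\leq\Vert T_A\Vert$ you are trying to prove, so ``the admissible multipliers have controlled norm in the relevant nuclear/tensor sense'' is a restatement of the theorem, not a step toward it. The naive estimate gives only $\pi(u)\leq\sum_{j_1}|t_{j_1}|=\Vert t\Vert_1$, and $\Vert t\Vert_{b_0'}\leq1$ does not control $\Vert t\Vert_1$ since $b_0'>1$. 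Likewise, your claim that ``no Khinchin-type averaging is needed and the constant remains $1$'' is unsubstantiated: the known constant-$1$ proofs in this regime require real input---Hardy and Littlewood's original H\"older-based argument for Theorem 5, or the complex-interpolation scheme of Osikiewicz and Tonge between endpoint cases where the constant is $1$---and without reproducing one of these (or honestly citing the linear inequality as a black box and deleting the purported ``reproof''), the sufficiency half of your argument, including the sharp constant, is missing.
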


\begin{theorem}
\label{psstbil}(see Pellegrino et al. 2017) Let $p,q\in\left[  2,\infty
\right]  $, with $\frac{1}{p}+\frac{1}{q}<1$. The following assertions are equivalent:

(a) There is a constant $C\geq1$ (not depending on $n$) such that%
\[
\left(  \sum_{j_{1}=1}^{n}\left(  \sum_{j_{2}=1}^{n}\left\vert A(e_{j_{1}%
},e_{j_{2}})\right\vert ^{a}\right)  ^{\frac{b}{a}}\right)  ^{\frac{1}{b}}\leq
C\left\Vert A\right\Vert
\]
for all bilinear forms $A:\ell_{p}^{n}\times\ell_{q}^{n}\rightarrow\mathbb{K}$
and all positive integers $n$.

(b) The exponents $a,b$ satisfy
\[
\left(  a,b\right)  \in\left[  \frac{q}{q-1},\infty\right)  \times\left[
\frac{1}{1-\left(  \frac{1}{p}+\frac{1}{q}\right)  },\infty\right)
\]
and
\begin{equation}
\frac{1}{a}+\frac{1}{b}\leq\frac{3}{2}-\left(  \frac{1}{p}+\frac{1}{q}\right)
. \label{78}%
\end{equation}

\end{theorem}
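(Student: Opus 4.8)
The plan is to prove the two implications separately and to phrase everything in the coordinates $u=1/a$, $v=1/b$. In these coordinates condition (b) describes the convex polygon $R=\{(u,v):0\le u\le 1-\frac1q,\ 0\le v\le 1-\frac1p-\frac1q,\ u+v\le S\}$, where $S:=\frac32-\frac1p-\frac1q$. Its two northeast vertices, where the edge $u+v=S$ meets the box, are $P_1=(1-\frac1q,\ \frac12-\frac1p)$ and $P_2=(\frac12,\ 1-\frac1p-\frac1q)$, that is $(a,b)=(\frac{q}{q-1},\frac{2p}{p-2})$ and $(a,b)=(2,\frac{1}{1-(1/p+1/q)})$ respectively.

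For $(a)\Rightarrow(b)$ I would substitute three explicit families of forms and let $n\to\infty$. The joint inequality \eqref{78} is forced by the Kahane--Salem--Zygmund inequality: since $p,q\ge2$, there are signs $\varepsilon_{j_1j_2}=\pm1$ with $\|A_\varepsilon\|\le C\,n^{3/2-1/p-1/q}$, while $|A_\varepsilon(e_{j_1},e_{j_2})|=1$ for all $j_1,j_2$ makes the left-hand side equal to $n^{1/a+1/b}$; comparing exponents yields $\frac1a+\frac1b\le\frac32-(\frac1p+\frac1q)$. The two box constraints come from rank-one tests: the form $A(x,y)=x_1\sum_{j_2}y_{j_2}$ has $\|A\|=n^{1/q'}$ and left-hand side $n^{1/a}$, giving $a\ge\frac{q}{q-1}$; and the diagonal form $A(x,y)=\sum_j x_jy_j$ has $\|A\|=n^{1-1/p-1/q}$ and left-hand side $n^{1/b}$, giving $b\ge\frac{1}{1-(1/p+1/q)}$.

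For $(b)\Rightarrow(a)$, note first that for a fixed array the left-hand side is non-increasing in both $a$ and $b$ (the inner and outer $\ell_s$-sums shrink as $s$ grows); equivalently, in the coordinates $(u,v)$ it is monotone, so that establishing the inequality at a point $(u_0,v_0)$ automatically gives it at every $(u,v)$ with $u\le u_0$ and $v\le v_0$. Since every point of $R$ is dominated coordinatewise by a point of the segment $[P_1,P_2]$ — those with $u<\frac12$ by $P_2$, and those with $\frac12\le u\le 1-\frac1q$ by $(u,S-u)$ — it suffices to prove the inequality along $[P_1,P_2]$. That segment is reached from its two endpoints by interpolation of the mixed norms $\ell_b(\ell_a)$, which is linear in $(u,v)$ and hence stays on the line $u+v=S$.

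Everything thus reduces to the two anchor inequalities at $P_1$ and $P_2$, and this is where I expect the real difficulty. Both are sharp mixed Hardy--Littlewood estimates proved through the Khinchin inequality: one replaces the inner sum $(\sum_{j_2}|a_{j_1 j_2}|^{a})^{1/a}$ by an average of $|A(e_{j_1},\delta)|$ over signs $\delta=(\delta_{j_2})$ and then sums in $j_1$. The subtle point is that estimating the resulting functional $A(\cdot,\delta)$ by the crude bound $\|A(\cdot,\delta)\|\le\|A\|\,\|\delta\|_q=\|A\|\,n^{1/q}$ loses a factor $n^{1/q}$; the constant independent of $n$ is recovered only by treating the two variables jointly, through the balanced Hölder bookkeeping that underlies the Hardy--Littlewood and Bohnenblust--Hille machinery (at $P_1$ one may alternatively pass to the associated operator $\ell_p^n\to\ell_{q'}^n$ via duality in the second variable and invoke the linear Hardy--Littlewood theorem). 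Verifying these two endpoints with $n$-independent constants, and checking that the interpolation exponents reproduce \eqref{78}, is the crux of the argument.
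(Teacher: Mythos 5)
Your proposal is a correct and well-organized \emph{reduction}, but it is not a proof: the positive direction is left resting on the two anchor inequalities at $P_1=\bigl(q/(q-1),\,2p/(p-2)\bigr)$ and $P_2=\bigl(2,\,1/(1-(1/p+1/q))\bigr)$, and you explicitly defer these (``this is where I expect the real difficulty \dots\ is the crux of the argument''). That crux is the entire content of the theorem's sufficiency half. Everything you do establish is sound and standard: the Kahane--Salem--Zygmund test with unimodular coefficients correctly forces $\frac1a+\frac1b\le\frac32-(\frac1p+\frac1q)$ (the exponent $n^{3/2-1/p-1/q}$ is the right one since $p,q\ge2$); the rank-one and diagonal tests correctly force $a\ge q/(q-1)$ and $b\ge 1/(1-(1/p+1/q))$; the coordinatewise monotonicity of $\ell_b(\ell_a)$ mixed norms in $(1/a,1/b)$, the domination of the region $R$ by the segment $[P_1,P_2]$, and the mixed-norm H\"older interpolation along that segment are all valid and are indeed how the known proofs organize the problem. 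But your sketch of the endpoints is only a diagnosis of why the naive Khinchin argument fails (the lost factor $n^{1/q}$), not an argument that recovers the $n$-independent constant; and your duality remark at $P_1$ does not close the gap either, since the inner norm there is the $\ell_{q'}$-norm with $q'\le 2$, which dominates the $\ell_2$-norm, so the Bennett/Hardy--Littlewood estimate for operators $\ell_p^n\to\ell_2^n$ is weaker than what is needed rather than stronger.

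For comparison: the paper you are working from offers no proof of this statement at all --- it is quoted verbatim from Pellegrino, Santos, Serrano-Rodr\'{\i}guez and Teixeira (2017), where both endpoint estimates (and the interpolation bookkeeping you describe) are obtained as consequences of their ``regularity principle'' in sequence spaces, a genuinely nontrivial inclusion-type theorem proved by an induction interleaving the Khinchin inequality (with optimal exponent splitting) and Minkowski's inequality for mixed norms. So the piece you have postponed is exactly the piece that required a new tool in the literature; to complete your argument you would need either to reproduce that machinery or to cite the endpoint cases precisely, after which your monotonicity-plus-interpolation scaffolding does correctly assemble them into the full equivalence.
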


Since (\ref{78}) is trivially verified under the conditions of Theorem
\ref{t1}, we can unify the two theorems as follows:

\begin{theorem}
\label{azx}Let $q\in(1,\infty]$ and $p\in[2,\infty]$, with $\frac{1}{p}%
+\frac{1}{q}<1$. The following assertions are equivalent:

(a) There is a constant $C\geq1$ (not depending on $n$) such that%
\[
\left(  \sum_{j_{1}=1}^{n}\left(  \sum_{j_{2}=1}^{n}\left\vert A(e_{j_{1}%
},e_{j_{2}})\right\vert ^{a}\right)  ^{\frac{b}{a}}\right)  ^{\frac{1}{b}}\leq
C\left\Vert A\right\Vert
\]
for all bilinear forms $A:\ell_{p}^{n}\times\ell_{q}^{n}\rightarrow\mathbb{K}$
and all positive integers $n$.

(b) The exponents $a,b$ satisfy
\[
\left(  a,b\right)  \in\left[  \frac{q}{q-1},\infty\right)  \times\left[
\frac{1}{1-\left(  \frac{1}{p}+\frac{1}{q}\right)  },\infty\right)
\]
and%
\[
\frac{1}{a}+\frac{1}{b}\leq\frac{3}{2}-\left(  \frac{1}{p}+\frac{1}{q}\right)
.
\]

\end{theorem}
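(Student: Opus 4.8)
The plan is to derive Theorem~\ref{azx} directly from the two results already in hand, Theorem~\ref{t1} and Theorem~\ref{psstbil}, by partitioning the parameter range into two overlapping regimes according to whether $q\leq 2$ or $q\geq 2$, and then reconciling the two slightly different forms of condition (b). First I would record a simple consequence of the standing hypothesis $\frac{1}{p}+\frac{1}{q}<1$: if $q\leq 2$ then $\frac{1}{q}\geq\frac12$, forcing $\frac{1}{p}<\frac12$ and hence $p>2$; thus the regime $q\in(1,2]$ automatically lands us in the configuration $1<q\leq 2<p$ governing Theorem~\ref{t1}, while the regime $q\in[2,\infty]$ gives $p,q\in[2,\infty]$, which is the setting of Theorem~\ref{psstbil}. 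Together these two regimes exhaust $q\in(1,\infty]$, $p\in[2,\infty]$.

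In the regime $q\geq 2$ there is nothing to prove beyond citation: Theorem~\ref{psstbil} characterizes the validity of (a) by exactly the membership condition together with $\frac{1}{a}+\frac{1}{b}\leq\frac32-(\frac{1}{p}+\frac{1}{q})$, which is verbatim condition (b) of Theorem~\ref{azx}.

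The only substantive point lies in the regime $1<q\leq 2<p$, where Theorem~\ref{t1} characterizes (a) solely by the membership $(a,b)\in[\frac{q}{q-1},\infty)\times[\frac{1}{1-(\frac{1}{p}+\frac{1}{q})},\infty)$, with no explicit appearance of the inequality (\ref{78}). To show the two versions of (b) coincide here, I would verify that (\ref{78}) is forced by the membership condition whenever $q\leq 2$. Indeed, $a\geq\frac{q}{q-1}$ and $b\geq\frac{1}{1-(\frac{1}{p}+\frac{1}{q})}$ give $\frac{1}{a}\leq 1-\frac{1}{q}$ and $\frac{1}{b}\leq 1-\frac{1}{p}-\frac{1}{q}$, whence $\frac{1}{a}+\frac{1}{b}\leq 2-\frac{1}{p}-\frac{2}{q}$; this last quantity is at most $\frac32-\frac{1}{p}-\frac{1}{q}$ precisely when $\frac12\leq\frac{1}{q}$, that is, when $q\leq 2$. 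Thus under the hypotheses of Theorem~\ref{t1} condition (b) of Theorem~\ref{azx} reduces to the membership condition, and the equivalence (a)$\Leftrightarrow$(b) is inherited from Theorem~\ref{t1}.

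Assembling the two regimes (which agree on the overlap $q=2$, where both cited theorems apply and yield the same conclusion) proves the equivalence (a)$\Leftrightarrow$(b) on the whole range. The main obstacle, such as it is, is entirely the elementary inequality of the previous paragraph; all of the genuine analytic content---the forward estimates producing the constant and the sharpness examples for the converse---is imported wholesale from Theorems~\ref{t1} and~\ref{psstbil}, so no new hard analysis is expected.
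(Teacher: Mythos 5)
Your proposal is correct and takes essentially the same approach as the paper: Theorem~\ref{azx} is obtained there precisely by unifying Theorems~\ref{t1} and~\ref{psstbil}, with the one-line remark that (\ref{78}) ``is trivially verified under the conditions of Theorem~\ref{t1}.'' Your elementary computation $\frac{1}{a}+\frac{1}{b}\leq 2-\frac{1}{p}-\frac{2}{q}\leq\frac{3}{2}-\left(\frac{1}{p}+\frac{1}{q}\right)$ for $q\leq 2$ (together with the observation that $\frac{1}{p}+\frac{1}{q}<1$ and $q\leq2$ force $p>2$) simply makes explicit the verification the paper leaves to the reader.
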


In 1981, Praciano--Pereira (see Praciano--Pereira 1981) extended the Hardy--Littlewood inequalities to
$m$-linear forms as follows: if $p_{1},...,p_{m}\in\lbrack1,\infty]$ and
\[
\frac{1}{p_{1}}+\cdots+\frac{1}{p_{m}}\leq\frac{1}{2},
\]
there exists a constant $C\geq1$ (not depending on $n$) such that
\begin{equation}
\left(  \sum_{j_{1},...,j_{m}=1}^{n}|T(e_{j_{1}},...,e_{j_{m}})|^{\frac
{2m}{m+1-2\left(  \frac{1}{p_{1}}+\cdots+\frac{1}{p_{m}}\right)  }}\right)
^{\frac{m+1-2\left(  \frac{1}{p_{1}}+\cdots+\frac{1}{p_{m}}\right)  }{2m}}\leq
C\Vert T\Vert, \label{bv}%
\end{equation}
for all $m$-linear forms $T:\ell_{p_{1}}^{n}\times\cdots\times\ell_{p_{m}}%
^{n}\rightarrow\mathbb{K}$ and for all positive integers $n$.

When
\[
\frac{1}{2}\leq\frac{1}{p_{1}}+\cdots+\frac{1}{p_{m}}<1,
\]
Dimant and Sevilla--Peris (see Dimant \& Sevilla--Peris 2016 and Cavalcante 2018) have proved that
there exists a constant $C\geq1$ (not depending on $n$) such that
\begin{equation}
\left(  \sum_{j_{1},...,j_{m}=1}^{n}|T(e_{j_{1}},...,e_{j_{m}})|^{\frac
{1}{1-\left(  \frac{1}{p_{1}}+\cdots+\frac{1}{p_{m}}\right)  }}\right)
^{1-\left(  \frac{1}{p_{1}}+\cdots+\frac{1}{p_{m}}\right)  }\leq C\Vert
T\Vert, \label{bvf}%
\end{equation}
for all $m$-linear forms $T:\ell_{p_{1}}^{n}\times\cdots\times\ell_{p_{m}}%
^{n}\rightarrow\mathbb{K}$ and for all positive integers $n$.

Both in (\ref{bv}) and (\ref{bvf}) the exponents are sharp, but there still
remains the question: what about anisotropic versions of (\ref{bv}) and
(\ref{bvf})?

In Albuquerque et al. 2014, the anisotropic version of the result of
Praciano--Pereira was finally settled (see also Santos \& Velanga 2017 for a
completer version for the case $p_{1},...,p_{m}=\infty$):

\begin{theorem}
\label{abps}(see Theorem 1.2 in Albuquerque et al. 2014 and Theorem 5.2 in
Pellegrino et al. 2017) Let $p_{1},...,p_{m}\in\lbrack1,\infty]$ be such that
\[
\frac{1}{p_{1}}+\cdots+\frac{1}{p_{m}}\leq\frac{1}{2}%
\]
and%
\[
q_{1},...,q_{m}\in\left[  \frac{1}{1-\left(  \frac{1}{p_{1}}+\cdots+\frac
{1}{p_{m}}\right)  },2\right]  .
\]
The following assertions are equivalent:

(a) There is a constant $C\geq1$ (not depending on $n$) such that
\[
\left(  \sum_{j_{1}=1}^{n}\left(  \cdots\left(  \sum_{j_{m}=1}^{n}\left\vert
A\left(  e_{j_{1}},\dots,e_{j_{m}}\right)  \right\vert ^{q_{m}}\right)
^{\frac{q_{m-1}}{q_{m}}}\cdots\right)  ^{\frac{q_{1}}{q_{2}}}\right)
^{\frac{1}{q_{1}}}\leq C\left\Vert A\right\Vert ,
\]
for all $m$-linear forms $A:\ell_{p_{1}}^{n}\times\cdots\times\ell_{p_{m}}%
^{n}\longrightarrow\mathbb{K}$ and all positive integers $n$.

(b) The inequality
\[
\frac{1}{q_{1}}+\cdots+\frac{1}{q_{m}}\leq\dfrac{m+1}{2}-\left(  \frac
{1}{p_{1}}+\cdots+\frac{1}{p_{m}}\right)
\]
is verified.
\end{theorem}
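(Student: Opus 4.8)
The plan is to prove the two implications separately: the exponent condition in (b) will come from testing (a) against Kahane--Salem--Zygmund extremal forms, while the estimate in (a) will be obtained by interpolating finitely many ``vertex'' inequalities. Write $s=\frac1{p_1}+\cdots+\frac1{p_m}$, so that $s\le\frac12$ forces every $p_i\ge2$. For the implication (a)~$\Rightarrow$~(b) I would use the multilinear Kahane--Salem--Zygmund inequality: there exist $m$-linear forms $T_n\colon\ell_{p_1}^n\times\cdots\times\ell_{p_m}^n\to\mathbb{K}$ whose coefficients all have modulus $1$ and for which $\|T_n\|\le C_m\,n^{\frac{m+1}{2}-s}$. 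Since $|T_n(e_{j_1},\dots,e_{j_m})|\equiv1$, the nested sum on the left of (a) is the mixed norm of the all-ones array and telescopes to exactly $n^{\frac1{q_1}+\cdots+\frac1{q_m}}$. Inserting this into (a) gives $n^{\sum_i 1/q_i}\le C\,C_m\,n^{\frac{m+1}{2}-s}$ for all $n$, and letting $n\to\infty$ forces $\frac1{q_1}+\cdots+\frac1{q_m}\le\frac{m+1}{2}-s$. The only external input is the Kahane--Salem--Zygmund estimate with mixed $\ell_{p_i}$ norms and its sharp exponent $\frac{m+1}{2}-s$.

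For (b)~$\Rightarrow$~(a) I would first reduce to finitely many extremal exponents. Put $x_i=1/q_i$, so the admissible region is $\{x\in[\tfrac12,1-s]^m:\sum_i x_i\le\frac{m+1}{2}-s\}$. On a fixed finite block the nested norm decreases when any $q_i$ is increased, i.e. it is nondecreasing in each $x_i$; moreover every admissible $x$ lies coordinatewise below some point on the upper face $\sum_i x_i=\frac{m+1}{2}-s$, there being room to raise coordinates inside the box precisely because $(m-1)(\tfrac12-s)\ge0$. Hence it suffices to prove (a) on that face. A direct computation identifies the face as the simplex whose $m$ vertices are the permutations of the exponent vector $(q_1,\dots,q_m)=(\tfrac1{1-s},2,\dots,2)$: any face point $x$ is their convex combination with weights $\lambda_k=(x_k-\tfrac12)/(\tfrac12-s)$. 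Thus it is enough to establish (a) at these $m$ vertices and then interpolate.

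The core of the argument is the vertex inequality
\[
\left(\sum_{j_1=1}^n\Big(\sum_{j_2,\dots,j_m=1}^n|T(e_{j_1},\dots,e_{j_m})|^2\Big)^{\frac1{2(1-s)}}\right)^{1-s}\le C\,\|T\|,
\]
together with its analogues in which the distinguished slot is any of the $m$ variables. I would prove it by applying the multilinear Khinchin--Kahane inequality in the $m-1$ variables carrying the exponent $2$, replacing the inner $\ell_2$ sum by an average over Rademacher signs, then using H\"older's inequality to absorb the factors coming from $p_2,\dots,p_m$, and finally invoking the scalar Hardy--Littlewood inequality in the distinguished variable $j_1\in\ell_{p_1}$, whose optimal summation exponent is exactly $\frac1{1-s}$. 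The exponents are arranged to sum to $(1-s)+\frac{m-1}{2}=\frac{m+1}{2}-s$, matching the face.

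Finally I would pass from the vertices to the whole face via the mixed-norm interpolation (H\"older) inequality of Benedek--Panzone: if $\frac1{q_i}=\sum_k\lambda_k\frac1{q_i^{(k)}}$ with $\lambda_k\ge0$ and $\sum_k\lambda_k=1$, then $\|a\|_{(q_1,\dots,q_m)}\le\prod_k\|a\|_{(q_1^{(k)},\dots,q_m^{(k)})}^{\lambda_k}$ for the nested mixed norm; applied to $a=(T(e_j))_j$ with each factor bounded by $C\|T\|$, this yields (a) on the whole simplex, and the monotonicity reduction then delivers (a) on the full region. The main obstacle is the vertex inequality itself: one must combine Khinchin in several variables with the scalar Hardy--Littlewood estimate while keeping the constant independent of $n$ and landing on the exact critical exponent $\frac1{1-s}$. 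The interpolation is delicate only in that the distinguished slot moves from vertex to vertex, so the nested mixed norms being interpolated are genuinely different arrangements, which is exactly what the Benedek--Panzone log-convexity handles.
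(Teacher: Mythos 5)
You should first note a point of reference: the paper does not prove Theorem \ref{abps} at all --- it is imported from the literature (Theorem 1.2 in Albuquerque et al. 2014 and Theorem 5.2 in Pellegrino et al. 2017), so your proposal must be measured against those sources, and its architecture matches theirs faithfully. The necessity direction via Kahane--Salem--Zygmund is exactly the standard one, and it is correct as you wrote it: $s\leq\frac{1}{2}$ forces every $p_{i}\geq2$, so the KSZ exponent is $\frac{m+1}{2}-s$, the all-ones array has nested mixed norm exactly $n^{\sum_{i}1/q_{i}}$, and comparing powers of $n$ gives (b). Your reduction for sufficiency is also sound: the mixed norm is nondecreasing in each $x_{i}=1/q_{i}$ on a fixed block, your budget check $(m-1)(\frac{1}{2}-s)\geq0$ is precisely what is needed to lift any admissible point to the face, and the face is indeed the simplex on the $m$ permutations of $\left(\frac{1}{1-s},2,\dots,2\right)$ with the weights $\lambda_{k}=(x_{k}-\frac{1}{2})/(\frac{1}{2}-s)$ you state (one should remark on the degenerate case $s=\frac{1}{2}$, where the admissible region collapses to $q_{i}\equiv2$ and no interpolation is needed). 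One simplification in your favor: the nesting order of the indices $j_{1},\dots,j_{m}$ is the same in all $m$ vertex norms --- only the exponent vector changes from vertex to vertex --- so the Benedek--Panzone log-convexity applies verbatim and your closing worry about ``genuinely different arrangements'' is unfounded.

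The genuine soft spot is the vertex inequality, which you rightly single out as the core but then justify with a wrong attribution. The scalar Hardy--Littlewood inequality in the distinguished variable $j_{1}\in\ell_{p_{1}}$ has optimal exponent $\frac{p_{1}}{p_{1}-1}=\frac{1}{1-1/p_{1}}$, \emph{not} $\frac{1}{1-s}$; since $s\geq1/p_{1}$, the target exponent $\frac{1}{1-s}$ is strictly larger whenever some other $p_{i}<\infty$, and the slack between the two is exactly what must absorb the Khinchin losses. Concretely, when you replace the inner $\ell_{2}$ sum over $j_{2},\dots,j_{m}$ by a Rademacher average, the sign vectors $\sum_{j}\epsilon_{j}e_{j}$ have $\ell_{p_{i}}$-norm $n^{1/p_{i}}$, so after normalization a factor $n^{s-1/p_{1}}$ appears, and a single pass of Khinchin plus H\"older plus the linear estimate at $\frac{1}{1-1/p_{1}}$ yields the vertex bound only with a constant growing like a power of $n$. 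Removing that loss is where the cited proofs do real work: Albuquerque et al. 2014 run an induction on $m$ (their Theorem 1.1) combining Khinchin with optimal constants and Minkowski's inequality, while Pellegrino et al. 2017 deduce the endpoint from their regularity principle. You also need Minkowski's inequality (legitimate here because $\frac{1}{1-s}\leq2$) to reduce a vertex whose distinguished exponent sits in an interior slot $k$ to the outermost-slot case, a step your sketch omits. In short: right skeleton, correct necessity argument, correct monotonicity and interpolation steps, but the endpoint estimate as sketched does not close and must either be cited or proved by the inductive argument.
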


The anisotropic version of (\ref{bvf}) is still not completely solved, but in
Aron et al. 2017 the following partial answer (that also generalizes Theorem
\ref{t1}) was obtained:

\begin{theorem}
\label{aron} (see Theorem 3.2 in Aron et al. 2017) Let $m\geq2$ and
$1<p_{m}\leq2<p_{1},...,p_{m-1}$, with
\[
\dfrac{1}{p_{1}}+\cdots+\dfrac{1}{p_{m}}<1.
\]
The following assertions are equivalent:

(a) There is a constant $C\geq1$ (not depending on $n$) such that
\[
\left(  \sum_{j_{1}=1}^{n}\left(  \cdots\left(  \sum_{j_{m}=1}^{n}\left\vert
A\left(  e_{j_{1}},\dots,e_{j_{m}}\right)  \right\vert ^{q_{m}}\right)
^{\frac{q_{m-1}}{q_{m}}}\cdots\right)  ^{\frac{q_{1}}{q_{2}}}\right)
^{\frac{1}{q_{1}}}\leq C\left\Vert A\right\Vert ,
\]
for all $m$-linear forms $A:\ell_{p_{1}}^{n}\times\cdots\times\ell_{p_{m}}%
^{n}\longrightarrow\mathbb{K}$ and all positive integers $n.$

(b) The exponents $q_{1},...,q_{m}$ satisfy
\[
q_{1}\geq\delta_{m}^{p_{1},...,p_{m}},q_{2}\geq\delta_{m-1}^{p_{2},...,p_{m}%
},...,q_{m-1}\geq\delta_{2}^{p_{m-1},p_{m}},q_{m}\geq\delta_{1}^{p_{m}},
\]
with
\[
\delta_{m-k+1}^{p_{k},...,p_{m}}:=\dfrac{1}{1-\left(  \frac{1}{p_{k}}%
+\cdots+\frac{1}{p_{m}}\right)  }.
\]

\end{theorem}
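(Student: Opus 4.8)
The plan is to prove the equivalence by induction on $m$, taking Theorem \ref{t1} (the case $m=2$, under the identification $a=q_2$, $b=q_1$, $p=p_1$, $q=p_2$) as the base step. It is convenient to record at the outset that the extremal exponents $q_k=\delta_{m-k+1}^{p_k,\dots,p_m}$ are characterized by $\frac{1}{q_k}=1-\left(\frac{1}{p_k}+\cdots+\frac{1}{p_m}\right)$, so that they obey the recursion $\frac{1}{q_k}=\frac{1}{q_{k+1}}-\frac{1}{p_k}$ with $q_m=p_m'$; this H\"older-type relation is what should drive the inductive step.

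For the implication (a)$\Rightarrow$(b) I would establish the necessity of each bound $q_k\ge\delta_{m-k+1}^{p_k,\dots,p_m}$ separately, using \emph{frozen diagonal} forms. Fixing $k$, consider
\[
A(x^{(1)},\dots,x^{(m)})=x^{(1)}_{1}\cdots x^{(k-1)}_{1}\sum_{j=1}^{n}x^{(k)}_{j}x^{(k+1)}_{j}\cdots x^{(m)}_{j}.
\]
Since $A$ is rank one in the first $k-1$ variables, freezing them at $e_1$ is optimal, so $\Vert A\Vert$ equals the norm of the diagonal $(m-k+1)$-linear form, which by H\"older is $n^{1-\left(\frac{1}{p_k}+\cdots+\frac{1}{p_m}\right)}$. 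On the other hand the first $k-1$ iterated sums collapse onto the single index $j_i=1$, and a direct computation shows that the diagonal tensor has mixed $(q_k,\dots,q_m)$-norm equal to $n^{1/q_k}$. Hence (a) forces $n^{1/q_k}\le Cn^{1-(\frac{1}{p_k}+\cdots+\frac{1}{p_m})}$ for all $n$, i.e. $q_k\ge\delta_{m-k+1}^{p_k,\dots,p_m}$, which is exactly (b).

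For (b)$\Rightarrow$(a) I would first reduce to the extremal exponents: for finite sequences the iterated mixed norm is nonincreasing in each exponent, so enlarging any $q_k$ only decreases the left-hand side, and it suffices to treat $q_k=\delta_{m-k+1}^{p_k,\dots,p_m}$ for every $k$. For the extremal case I would argue inductively by peeling off the \emph{first} variable, so that the distinguished index $p_m\le 2$ always remains last and the inductive hypothesis stays applicable (peeling the last variable would leave $p_{m-1}>2$ as the final index, violating the hypotheses). Writing $X$ for the mixed sequence space $\ell_{q_2}(\ell_{q_3}(\cdots\ell_{q_m}))$ on the last $m-1$ indices, the induction hypothesis applied to $A(x,\cdot,\dots,\cdot)$ shows that $u:\ell_{p_1}^{n}\to X$, $u(x)=\big(A(x,e_{j_2},\dots,e_{j_m})\big)_{j_2,\dots,j_m}$, is bounded with $\Vert u\Vert\le C_{m-1}\Vert A\Vert$. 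The target inequality then reduces to the \emph{vector-valued} estimate $\big(\sum_{j_1}\Vert u(e_{j_1})\Vert_X^{q_1}\big)^{1/q_1}\le C\Vert u\Vert$.

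This last estimate is where I expect the main difficulty. For scalar targets it is exactly the linear Hardy--Littlewood inequality $q_1\ge p_1'$, but here $X$ is a high-dimensional mixed-norm space and a bound independent of $\dim X$ is required; applying the linear inequality coordinatewise loses the correct exponent. The route I would follow is to exploit $p_1>2$ together with the recursion $\frac{1}{q_1}=\frac{1}{q_2}-\frac{1}{p_1}$, using Minkowski's inequality to interchange the outer $\ell_{q_1}$-norm with the inner norms and interpolating between the induction hypothesis and the linear estimate in the first variable; the resulting vector-valued sum is then controlled by the cotype/summability properties of $X$, equivalently by combining the isotropic inequality (\ref{bvf}) with the anisotropic Bohnenblust--Hille inequality (Theorem \ref{abps}) through a mixed $(\ell_1;\ell_2)$ argument. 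Checking that this interpolation reproduces precisely the extremal exponent $q_1=\delta_m^{p_1,\dots,p_m}$ with a constant independent of $n$ is the technical heart of the proof.
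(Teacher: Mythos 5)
First, note that the paper does not prove Theorem \ref{aron} at all: it is imported verbatim from Aron et al.\ 2017 (Theorem 3.2) and used as a black box, so your attempt can only be measured against the cited literature and on its own merits. On those terms, your necessity direction (a)$\Rightarrow$(b) is correct and complete: the frozen-diagonal forms $A(x^{(1)},\dots,x^{(m)})=x^{(1)}_{1}\cdots x^{(k-1)}_{1}\sum_{j}x^{(k)}_{j}\cdots x^{(m)}_{j}$ do satisfy $\Vert A\Vert=n^{1-(\frac{1}{p_k}+\cdots+\frac{1}{p_m})}$ by H\"older (with equality at $x^{(i)}_j=n^{-1/p_i}$), the mixed norm of the coefficient tensor is indeed $n^{1/q_k}$ because the diagonal indicator propagates through the nested sums and the outer $k-1$ levels collapse to a single entry, and comparing growth rates gives exactly $q_k\geq\delta_{m-k+1}^{p_k,\dots,p_m}$. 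This is the standard optimality argument in this area (it is the same family of examples that underlies the sharpness part of Lemma \ref{t2}). Your reduction of (b)$\Rightarrow$(a) to the extremal exponents via monotonicity of mixed norms is also fine, as is the observation that one must peel the \emph{first} variable so that $p_m\leq2$ stays in last position.

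The genuine gap is the vector-valued step, which you yourself flag as "the technical heart" and then do not prove. The inequality $\bigl(\sum_{j_1}\Vert u(e_{j_1})\Vert_X^{q_1}\bigr)^{1/q_1}\leq C\Vert u\Vert$ with $\frac{1}{q_1}=\frac{1}{q_2}-\frac{1}{p_1}$ and $C$ independent of $n$ is not a soft consequence of Minkowski, interpolation, or cotype: for a general Banach target no finite exponent works at all (take $u=\mathrm{id}\colon\ell_{p_1}^n\to\ell_{p_1}^n$, which forces $n^{1/s}\leq C$), so the specific mixed-norm structure of $X=\ell_{q_2}(\ell_{q_3}(\cdots\ell_{q_m}))$ at the \emph{extremal} exponents must enter quantitatively — and at those exponents this estimate carries essentially the entire content of the inductive step, so asserting it follows from "interpolating between the induction hypothesis and the linear estimate" together with "cotype/summability properties of $X$" is circular as written. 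Moreover, it is doubtful that the statement for \emph{arbitrary} bounded $u\colon\ell_{p_1}^n\to X$ (rather than those arising from multilinear forms) is even true for $m\geq3$; the known proofs do not proceed through such a general operator bound. The published argument for Theorem 3.2 in Aron et al.\ 2017 runs instead through the hard multilinear machinery — the interpolative technique for mixed $\ell_p$-norms of Albuquerque et al.\ 2014 and the regularity principle of Pellegrino et al.\ 2017, exploiting $p_m\leq2$ via cotype-2 estimates in the last variable — and your sketch offers no concrete substitute for that step. So: necessity done, sufficiency skeleton plausible, but the decisive estimate is missing.
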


The attentive reader may wonder why the case
\begin{equation}
\dfrac{1}{p_{1}}+\cdots+\dfrac{1}{p_{m}}\geq1 \label{3w}%
\end{equation}
is not investigated in the previous results? The reason is simple, because in
this case it is easy to prove that if there exists $C$ (not depending in $n$)
such that
\[
\left(  \sum_{j_{1},...,j_{m}=1}^{n}\left\vert T\left(  e_{j_{1}}%
,\dots,e_{j_{m}}\right)  \right\vert ^{s}\right)  ^{\frac{1}{s}}\leq
C\left\Vert T\right\Vert , \label{sigualinfinito}%
\]
for all $m$-linear forms $T:\ell_{p_{1}}^{n}\times\cdots\times\ell_{p_{m}}%
^{n}\longrightarrow\mathbb{K}$ and all positive integers $n$, then $s=\infty$
(i.e., we are forced to deal with the $\sup$ norm, and the result becomes
trivial). However, under the anisotropic viewpoint, as a matter of fact, there
is no reason to avoid the case (\ref{3w}) and it constitutes a vast field yet
to be explored. The first step in this direction is the following:

\begin{theorem}
\label{criticodjair}(see Theorem 1 in Paulino 2019) For all $m\geq2$ we have
\begin{equation}
\sup_{j_{1}}\left(  \sum_{j_{2}=1}^{n}\left(  \cdots\left(  \sum_{j_{m}=1}%
^{n}\left\vert T\left(  e_{j_{1}},\dots,e_{j_{m}}\right)  \right\vert ^{q_{m}%
}\right)  ^{\frac{q_{m-1}}{q_{m}}}\cdots\right)  ^{\frac{q_{2}}{q_{3}}%
}\right)  ^{\frac{1}{q_{2}}}\leq2^{\frac{m-2}{2}}\left\Vert T\right\Vert
\label{91}%
\end{equation}
for all $m$-linear forms $T:\ell_{m}^{n}\times\cdots\times\ell_{m}%
^{n}\rightarrow\mathbb{K},$ and all positive integers $n$, with
\[
q_{k}=\frac{2m(m-1)}{mk-2k+2}%
\]
for all $k=2,....,m.$ Moreover, $q_{1}=\infty$ and $q_{2}=m$ are sharp and,
for $m>2$ the optimal exponents $q_{k}$ satisfying (\ref{91}) fulfill
\[
q_{k}\geq\frac{m}{k-1}, k=2,....,m.
\]

\end{theorem}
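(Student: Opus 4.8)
The plan splits into the upper estimate \eqref{91} and the three optimality claims, which I attack by completely different devices.

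\emph{Reduction of the upper bound.} Since $\|e_{j_1}\|_{\ell_m^n}=1$, for each fixed index $j_1$ the $(m-1)$-linear form $B:=T(e_{j_1},\cdot,\dots,\cdot)$ on $\ell_m^n\times\cdots\times\ell_m^n$ satisfies $\|B\|\le\|T\|$. Taking the supremum over $j_1$, inequality \eqref{91} follows once I prove the supremum-free estimate
\[
\left(\sum_{j_2}\left(\cdots\left(\sum_{j_m}|B(e_{j_2},\dots,e_{j_m})|^{q_m}\right)^{\frac{q_{m-1}}{q_m}}\cdots\right)^{\frac{q_2}{q_3}}\right)^{\frac1{q_2}}\le 2^{\frac{m-2}{2}}\|B\|
\]
for every $(m-1)$-linear $B$ on $\ell_m^n$'s. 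For $m=2$, $B$ is a linear functional and this is just the $\ell_2$-duality $(\sum_{j_2}|B(e_{j_2})|^{2})^{1/2}=\|B\|_{(\ell_2^n)^\ast}\le\|T\|$ (here $q_2=2$), with constant $2^{0}=1$. For $m=3$ it is precisely a bilinear Hardy--Littlewood inequality on $\ell_3^n\times\ell_3^n$, and one checks that the target exponents $(a,b)=(q_3,q_2)=(\tfrac{12}{5},3)$ satisfy condition (b) of Theorem \ref{azx}: indeed $q_3\ge\frac{3}{2}$, $q_2\ge 3$, and $\frac{5}{12}+\frac13=\frac34\le\frac32-\frac23$. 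Its Khinchin-based proof delivers exactly the Littlewood constant $\sqrt2=2^{1/2}$.

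\emph{The engine for general $m$.} For $m\ge4$ I would run the interpolative scheme of Albuquerque et al.\ and Aron et al., adapted to the critical exponents: establish a family of mixed $(\ell_{s},\ell_2)$ inequalities for $B$, each obtained by treating one distinguished variable through the exact $\ell_m$-duality bound $(\sum_{j}|\varphi(e_j)|^2)^{1/2}\le(\sum_j|\varphi(e_j)|^{m/(m-1)})^{(m-1)/m}=\|\varphi\|_{(\ell_m^n)^\ast}$ and the remaining variables through Theorem \ref{azx}, and then glue them with Minkowski's inequality for nested norms and the generalized H\"older inequality. Each gluing step trades one summation index for a single factor $\sqrt2$, so $(m-2)$ steps yield $(\sqrt2)^{m-2}=2^{(m-2)/2}$, while the H\"older weights are chosen so that the output exponents solve the induced recursion, whose solution is $1/q_k=\frac{k(m-2)+2}{2m(m-1)}$, i.e.\ $q_k=\frac{2m(m-1)}{mk-2k+2}$. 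The main obstacle lives here: one must verify that at every gluing step the running pair of exponents stays inside the admissible region (b) of Theorem \ref{azx}, which is exactly where the criticality $\frac1{p_1}+\cdots+\frac1{p_m}=1$ is used (a larger sum would push some intermediate exponent out of range), and that the accounting of the $\sqrt2$'s is tight enough to land on $2^{(m-2)/2}$ and not a larger power.

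\emph{Optimality.} The three lower bounds I would obtain cleanly from diagonal forms, with no Kahane--Salem--Zygmund input. Fix $2\le k\le m$ and set
\[
T(x^{(1)},\dots,x^{(m)}):=x^{(1)}_1\cdots x^{(k-1)}_1\sum_{a=1}^{n}x^{(k)}_a x^{(k+1)}_a\cdots x^{(m)}_a ,
\]
so $T(e_{j_1},\dots,e_{j_m})=1$ exactly when $j_1=\cdots=j_{k-1}=1$ and $j_k=\cdots=j_m$, and $0$ otherwise. The coordinate functionals have norm $1$ and the $(m-k+1)$-fold diagonal has norm $n^{1-(m-k+1)/m}=n^{(k-1)/m}$ (H\"older, attained at the normalized constant vector), so $\|T\|=n^{(k-1)/m}$; meanwhile the nested sum collapses along the diagonal to $n^{1/q_k}$. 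Thus \eqref{91} forces $n^{1/q_k}\le 2^{\frac{m-2}{2}}n^{(k-1)/m}$ for all $n$, i.e.\ $q_k\ge\frac{m}{k-1}$; the case $k=2$ gives $q_2\ge m$, so the value $q_2=m$ is sharp. Finally, the full diagonal $T(x^{(1)},\dots,x^{(m)})=\sum_a x^{(1)}_a\cdots x^{(m)}_a$ has norm $1$ while the left-hand side with a finite first exponent $q_1$ would equal $n^{1/q_1}$; letting $n\to\infty$ forces $q_1=\infty$, as anticipated in the remark preceding the theorem.
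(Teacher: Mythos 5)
Your reduction and your sharpness arguments are correct, and the sharpness half of your proposal is actually \emph{more} self-contained than anything in the paper: the paper states this theorem without proof (it is quoted from Paulino 2019, and recovered only at the very end as the case $k=1$, $p_{1}=\cdots=p_{m}=m$ of its final, also proof-omitted, theorem whose optimality discussion leans on the Kahane--Salem--Zygmund inequality). Your diagonal forms $T=x^{(1)}_{1}\cdots x^{(k-1)}_{1}\sum_{a}x^{(k)}_{a}\cdots x^{(m)}_{a}$ do the job elementarily: the computations $\left\Vert T\right\Vert =n^{(k-1)/m}$ (H\"older, attained at normalized constant vectors) and the collapse of the mixed norm to $n^{1/q_{k}}$ both check out, yielding $q_{k}\geq m/(k-1)$, hence the sharpness of $q_{2}=m$; and the full diagonal correctly forces $q_{1}=\infty$. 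Your exponent arithmetic is also right: $1/q_{k}=\frac{k(m-2)+2}{2m(m-1)}=\frac{mk-2k+2}{2m(m-1)}$ is exactly what Theorem \ref{teo111} (Albuquerque--Rezende) produces when applied to $(m-1)$-linear forms with all $p_{i}=m$, namely $\frac{1}{q_{k}}=\frac{1}{2}+\frac{m-k+1}{2(m-1)}-\frac{m-k+1}{m}$, and your fixing-the-first-variable step is the easy direction of the paper's Lemma \ref{t2}.

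The genuine gap is the upper bound for $m\geq4$: your ``engine'' paragraph is a program, not a proof. The family of mixed $(\ell_{s},\ell_{2})$ estimates, the Minkowski/H\"older gluing, the verification that every intermediate pair of exponents stays in the admissible region of Theorem \ref{azx}, and the accounting that each step costs exactly one factor $\sqrt{2}$ are precisely the content that must be established --- and you flag this yourself. As written, \eqref{91} is proved only for $m=2,3$ (and even at $m=3$ the constant $\sqrt{2}$ is asserted about the unexhibited proof of Theorem \ref{azx}, not derived). The exponent part of the gap closes instantly by citing Theorem \ref{teo111} with $m-1$ variables and $p_{i}=m$ (legitimate since $m\leq2(m-1)$ and $\frac{m-1}{m}<1$) and then taking the supremum over the frozen first index, which is exactly the paper's own route in its closing remark; but Theorem \ref{teo111} is stated with an unspecified constant $C$, so the explicit value $2^{(m-2)/2}$ --- the one quantitative claim of the theorem --- remains unproved by that citation and is precisely what Paulino 2019 supplies (in the present paper it appears only by fiat in (\ref{48}) with $k=1$). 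One small conceptual correction: criticality $\frac{1}{p_{1}}+\cdots+\frac{1}{p_{m}}=1$ is not consumed inside the gluing, which runs entirely in the sub-critical regime $\frac{m-1}{m}<1$; it is spent in the reduction (making the $(m-1)$-linear problem sub-critical) and in the optimality step forcing $q_{1}=\infty$.
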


The case considered in Theorem \ref{criticodjair} is called critical because
it is a special case of (\ref{3w}), and from now on we shall call the case
(\ref{3w}) as super-critical case, which is the topic of the present paper. In
Section 2 we provide a partial solution to the super-critical case for
$3$-linear forms and in Section 3 we investigate what are the conditions
needed to obtain $m$-linear Hardy--Littlewood inequalities in the
super-critical case.

\section{The $3$-linear case}

We begin this section by presenting two simple, albeit very useful, lemmas
that will be used all along the paper.

\subsection{Two multi-purpose lemmas}

For $S=\{s_{1},\dots,s_{k}\}\subset\{1,\dots,m\}$, we define
\[
\widehat{S}:=\{1,\dots,m\}\setminus S
\]
and by $\mathbf{i}_{S}$ we shall mean $(i_{s_{1}},\dots,i_{s_{k}})$. If
$S=\{s_{1},\dots,s_{k}\}$ and $\mathbf{p}=\left(  p_{1},...,p_{m}\right)
\in(0,\infty]^{m}$, we define
\[
\left\vert \frac{1}{\mathbf{p}}\right\vert _{S}:=\frac{1}{p_{s_{1}}}%
+\cdots+\frac{1}{p_{s_{k}}}.
\]
The lemmas read as follows:

\begin{lemma}
\label{t2}Let $k\in\left\{  1,...,m\right\}  $ and $\mathbf{p=}\left(
p_{1},...,p_{m}\right)  \in\lbrack1,\infty]^{m}$. Let $S=\{s_{1},\dots
,s_{k}\}\subset\{1,\dots,m\}$. If there is a constant $C\geq1$ such that%
\[
\left(  \sum_{j_{s_{1}}=1}^{n}\left(  \sum_{j_{_{s_{2}}}=1}^{n}\cdots\left(
\sum_{j_{_{s_{k}}}=1}^{n}\left\vert A(e_{j_{s_{1}}},...,e_{j_{s_{k}}%
})\right\vert ^{q_{k}}\right)  ^{\frac{q_{k-1}}{q_{k}}}\cdots\right)
^{\frac{q_{1}}{q_{2}}}\right)  ^{\frac{1}{q_{1}}}\leq C\left\Vert
A\right\Vert
\]
for all $k$-linear forms $A\colon\ell_{p_{s_{1}}}^{n}\times\cdots\times
\ell_{p_{s_{k}}}^{n}\rightarrow\mathbb{K}$ and all positive integers $n$, then%
\[
\sup_{\mathbf{i}_{\widehat{S}}}\left(  \sum_{j_{s_{1}}=1}^{n}\left(
\sum_{j_{_{s_{2}}}=1}^{n}\cdots\left(  \sum_{j_{_{s_{k}}}=1}^{n}\left\vert
T(e_{j_{1}},...,e_{j_{m}})\right\vert ^{q_{k}}\right)  ^{\frac{q_{k-1}}{q_{k}%
}}\cdots\right)  ^{\frac{q_{1}}{q_{2}}}\right)  ^{\frac{1}{q_{1}}}\leq
C\left\Vert T\right\Vert
\]
for all $m$-linear forms $T\colon\ell_{p_{1}}^{n}\times\cdots\times\ell
_{p_{m}}^{n}\rightarrow\mathbb{K}$ and all positive integers $n$. Moreover,
if
\[
\left\vert \frac{1}{\mathbf{p}}\right\vert _{S}<1
\]
and, for every $j\in\widehat{S}$,
\[
\left\vert \frac{1}{\mathbf{p}}\right\vert _{S\cup\left\{  j\right\}  }\geq1,
\]
the $\sup$ cannot be improved.
\end{lemma}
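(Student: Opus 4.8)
The plan is to establish the two assertions separately, starting with the main inequality via a restriction (or \emph{plugging-in}) argument: one freezes the coordinates outside $S$ at basis vectors and reads off a $k$-linear form on the $S$-slots to which the hypothesis directly applies. Concretely, I would fix a tuple $\mathbf{i}_{\widehat S}=(i_t)_{t\in\widehat S}$ and define a $k$-linear form $A\colon\ell_{p_{s_1}}^n\times\cdots\times\ell_{p_{s_k}}^n\to\mathbb K$ by inserting the fixed unit vectors $e_{i_t}$ into the slots $t\in\widehat S$ of $T$ and leaving the $S$-slots free. Because each inserted vector satisfies $\|e_{i_t}\|_{p_t}=1$, one gets $\|A\|\le\|T\|$, while by construction $A(e_{j_{s_1}},\dots,e_{j_{s_k}})=T(e_{j_1},\dots,e_{j_m})$ at the relevant entries. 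Applying the hypothesis to $A$ bounds the inner iterated $(q_1,\dots,q_k)$-mixed norm by $C\|A\|\le C\|T\|$ uniformly in the frozen tuple, and taking the supremum over $\mathbf i_{\widehat S}$ yields the claim. This step is essentially index bookkeeping and presents no real difficulty.

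For the \emph{moreover} part I would show that for each fixed $j_0\in\widehat S$ the outer supremum over the $j_0$-coordinate cannot be weakened to any finite $\ell_r$-summation with a constant independent of $n$. The key device is the fully diagonal form
\[
T(x^{(1)},\dots,x^{(m)}):=\Bigl(\sum_{\ell=1}^n\ \prod_{s\in S}x^{(s)}_\ell\cdot x^{(j_0)}_\ell\Bigr)\cdot\!\!\prod_{t\in\widehat S\setminus\{j_0\}}\!\!x^{(t)}_1 .
\]
Since $\bigl|\tfrac1{\mathbf p}\bigr|_{S\cup\{j_0\}}\ge1$, one may choose Hölder exponents $r_s\ge p_s$ and $r_{j_0}\ge p_{j_0}$ with the reciprocals summing to $1$; generalized Hölder together with the nestings $\|\cdot\|_{r}\le\|\cdot\|_{p}$ then gives $\|T\|\le1$. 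On the other hand, freezing $i_t=1$ for $t\in\widehat S\setminus\{j_0\}$, the slice of $T$ at $i_{j_0}=\ell$ has a single nonzero (unit) entry, so its inner iterated $(q_1,\dots,q_k)$-norm equals $1$ for every $\ell$. Hence the supremum over $i_{j_0}$ equals $1$ (consistent with the main inequality), whereas any finite $\ell_r$-summation over $i_{j_0}$ produces $n^{1/r}$; comparing with $\|T\|\le1$ forces $r=\infty$. Running this construction for each $j_0\in\widehat S$—legitimate because $\bigl|\tfrac1{\mathbf p}\bigr|_{S\cup\{j\}}\ge1$ for every $j\in\widehat S$—gives the optimality of the supremum. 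The hypothesis $\bigl|\tfrac1{\mathbf p}\bigr|_S<1$ enters only to guarantee that the inner exponents $q_1,\dots,q_k$ can genuinely be finite, so that the statement is not vacuous.

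The single delicate point is isolating the correct extremal example. A random/unimodular (Kahane--Salem--Zygmund) form is too weak here and would, in borderline configurations, only force $r\ge 2$; by contrast the diagonal form—whose norm is controlled \emph{precisely} by the super-criticality $\bigl|\tfrac1{\mathbf p}\bigr|_{S\cup\{j_0\}}\ge1$—spreads $n$ unit-sized slices along the $j_0$-direction while keeping $\|T\|\le1$, and this is exactly what pins the outer exponent to infinity. I expect the verification of $\|T\|\le1$ through the right choice of Hölder exponents to be the main thing to get right.
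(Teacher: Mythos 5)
Your proof is correct. The first half---freezing the coordinates in $\widehat S$ at unit basis vectors, applying the $k$-linear hypothesis to the resulting restricted form, and taking the supremum over the frozen tuple---is exactly the paper's argument. For the \emph{moreover} part, however, you take a genuinely different and more self-contained route. The paper argues by contradiction: if the supremum over some $i\in\widehat S$ could be replaced by an $\ell_r$-sum, its Lemma \ref{t0} downgrades the improved $m$-linear inequality to $(k+1)$-linear forms on $\ell_{p_i}^n\times\ell_{p_{s_1}}^n\times\cdots\times\ell_{p_{s_k}}^n$; then, with $\rho=\max\{q_1,\dots,q_k,r\}$ and monotonicity of the $\ell_q$-norms, it lands on an isotropic estimate $\bigl(\sum|A|^{\rho}\bigr)^{1/\rho}\le C\Vert A\Vert$, which it declares impossible because $\bigl|\frac{1}{\mathbf p}\bigr|_{S\cup\{i\}}\ge1$---a fact quoted from the introduction without proof there. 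You instead exhibit an explicit extremal form at the $m$-linear level: the diagonal tensor in the slots $S\cup\{j_0\}$ times first-coordinate functionals in the remaining slots, with $\Vert T\Vert\le1$ secured by generalized H\"older (legitimate, since super-criticality lets you choose $1/r_s\le 1/p_s$ with $\sum_{s\in S\cup\{j_0\}}1/r_s=1$, and then $r_s\ge p_s\ge1$ gives $\Vert x\Vert_{r_s}\le\Vert x\Vert_{p_s}\le1$), while each slice $i_{j_0}=\ell$ carries a single unit entry, so any $\ell_r$-aggregation yields $n^{1/r}\le C$ for all $n$ and forces $r=\infty$. Note that your factor $\prod_{t\in\widehat S\setminus\{j_0\}}x^{(t)}_1$ is precisely the lifting device used inside the paper's proof of Lemma \ref{t0}, and your diagonal tensor is the standard witness behind the isotropic impossibility the paper cites; in effect you have inlined the two black boxes the paper composes, making the optimality argument self-contained. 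What your route buys: one explicit example, no dependence on Lemma \ref{t0} or on the unproved claim from the introduction, and it handles all $r\in(0,\infty)$ directly. What the paper's route buys: brevity and reuse of Lemma \ref{t0}, which it needs elsewhere anyway. Your closing observation that $\bigl|\frac{1}{\mathbf p}\bigr|_{S}<1$ serves only to keep the hypothesis non-vacuous is consistent with the paper, whose optimality argument likewise uses only $\bigl|\frac{1}{\mathbf p}\bigr|_{S\cup\{j\}}\ge1$.
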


\begin{proof}
To simplify the notation, we can suppose $\left(  s_{1},...,s_{k}\right)
=\left(  1,...,k\right)  $.

Let us fix the last $m-k$ variables and work with $k$-linear forms
$S\colon\ell_{p_{1}}^{n}\times\cdots\times\ell_{p_{k}}^{n}\rightarrow
\mathbb{K}$. Since
\[
\left(  \sum_{j_{1}=1}^{n}\left(  \sum_{j_{2}=1}^{n}\cdots\left(  \sum
_{j_{k}=1}^{n}\left\vert A(e_{j_{1}},...,e_{j_{k}})\right\vert ^{q_{k}%
}\right)  ^{\frac{q_{k-1}}{q_{k}}}\cdots\right)  ^{\frac{q_{1}}{q_{2}}%
}\right)  ^{\frac{1}{q_{1}}}\leq C\left\Vert A\right\Vert
\]
for all $k$-linear forms $A\colon\ell_{p_{1}}^{n}\times\cdots\times\ell
_{p_{k}}^{n}\rightarrow\mathbb{K}$, we know that there is a constant $C\geq1$,
such that for any fixed vectors $e_{j_{k+1}},...,e_{j_{m}}$, we have
\begin{align*}
&  \left(  \sum_{j_{s_{1}}=1}^{n}\left(  \sum_{j_{_{s_{2}}}=1}^{n}%
\cdots\left(  \sum_{j_{_{s_{k}}}=1}^{n}\left\vert T(e_{j_{1}},...,e_{j_{m}%
})\right\vert ^{q_{k}}\right)  ^{\frac{q_{k-1}}{q_{k}}}\cdots\right)
^{\frac{q_{1}}{q_{2}}}\right)  ^{\frac{1}{q_{1}}}\\
&  \leq C\left\Vert T\left(  \cdot,\cdots,\cdot,e_{j_{k+1}},...,e_{j_{m}%
}\right)  \right\Vert
\end{align*}
for all $m$-linear forms $T\colon\ell_{p_{1}}^{n}\times\cdots\times\ell
_{p_{m}}^{n}\rightarrow\mathbb{K}$. Then, there is a constant $C\geq1$, such
that
\begin{align*}
&  \sup_{\mathbf{i}_{\widehat{S}}}\left(  \sum_{j_{s_{1}}=1}^{n}\left(
\sum_{j_{_{s_{2}}}=1}^{n}\cdots\left(  \sum_{j_{_{s_{k}}}=1}^{n}\left\vert
T(e_{j_{1}},...,e_{j_{m}})\right\vert ^{q_{k}}\right)  ^{\frac{q_{k-1}}{q_{k}%
}}\cdots\right)  ^{\frac{q_{1}}{q_{2}}}\right)  ^{\frac{1}{q_{1}}}\\
&  \leq C\sup_{\mathbf{i}_{\widehat{S}}}\left\Vert T\left(  \cdot,\cdots
,\cdot,e_{j_{k+1}},...,e_{j_{m}}\right)  \right\Vert \\
&  \leq C\left\Vert T\right\Vert .
\end{align*}
for all $m$-linear forms $T\colon\ell_{p_{1}}^{n}\times\cdots\times\ell
_{p_{m}}^{n}\rightarrow\mathbb{K}$.

Now let us show that the $\sup$ cannot be improved. In fact, in this case we
have $m-k$ suprema and no one can be improved. Otherwise there will exist
$i\in\widehat{S}$, $r\in\left(  0,\infty\right)  $ and $C\geq1$ such that%
\[
\sup_{\mathbf{i}_{\widehat{S\cup\left\{  i\right\}  }}}\left(  \sum_{j_{i}%
=1}^{n}\left(  \sum_{j_{s_{1}}=1}^{n}\left(  \sum_{j_{_{s_{2}}}=1}^{n}%
\cdots\left(  \sum_{j_{_{s_{k}}}=1}^{n}\left\vert T(e_{j_{1}},...,e_{j_{m}%
})\right\vert ^{q_{k}}\right)  ^{\frac{q_{k-1}}{q_{k}}}\cdots\right)
^{\frac{q_{1}}{q_{2}}}\right)  ^{\frac{r}{q_{1}}}\right)  ^{\frac{1}{r}}\leq
C\left\Vert T\right\Vert
\]
for all $m$-linear forms $T\colon\ell_{p_{_{1}}}^{n}\times\cdots\times
\ell_{p_{m}}^{n}\rightarrow\mathbb{K}$ and all $n$. Using the Lemma \ref{t0},
this would imply the existence of a constant $C\geq1$ \ such that
\[
\left(  \sum_{j_{i}=1}^{n}\left(  \sum_{j_{s_{1}}=1}^{n}\left(  \sum
_{j_{_{s_{2}}}=1}^{n}\cdots\left(  \sum_{j_{_{s_{k}}}=1}^{n}\left\vert
A(e_{j_{i}},e_{j_{s_{1}}},...,e_{j_{s_{k}}})\right\vert ^{q_{k}}\right)
^{\frac{q_{k-1}}{q_{k}}}\cdots\right)  ^{\frac{q_{1}}{q_{2}}}\right)
^{\frac{r}{q_{1}}}\right)  ^{\frac{1}{r}}\leq C\left\Vert A\right\Vert
\]
for all $\left(  k+1\right)  $-linear forms $A\colon\ell_{p_{i}}^{n}\times
\ell_{p_{s_{1}}}^{n}\times\cdots\times\ell_{p_{s_{k}}}^{n}\rightarrow
\mathbb{K}$. Considering $\rho=\max\left\{  q_{1},...,q_{k},r\right\}  ,$ by
the monotonicity of the $\ell_{q}~$norms we conclude that there is a constant
$C\geq1$ such that%
\[
\left(  \sum_{j_{i},j_{s_{1}},...,j_{s_{k}}=1}^{n}\left\vert A(e_{j_{i}%
},e_{j_{s_{1}}},...,e_{j_{s_{k}}})\right\vert ^{\rho}\right)  ^{\frac{1}{\rho
}}\leq C\left\Vert A\right\Vert
\]
for all $\left(  k+1\right)  $-linear forms $A\colon\ell_{p_{i}}^{n}\times
\ell_{p_{s_{1}}}^{n}\times\cdots\times\ell_{p_{s_{k}}}^{n}\rightarrow
\mathbb{K}$. But this is impossible due to the hypothesis $\left\vert \frac
{1}{\mathbf{p}}\right\vert _{S\cup\left\{  i\right\}  }\geq1$.
\end{proof}

\begin{lemma}
\label{t0}Let $k\in\left\{  1,...,m\right\}  $ and $\mathbf{p=}\left(
p_{1},...,p_{m}\right)  \in\lbrack1,\infty]^{m}$. If there is a constant
$C\geq1$ such that%
\[
\left(  \sum_{j_{s_{1}}=1}^{n}\left(  \sum_{j_{_{s_{2}}}=1}^{n}\cdots\left(
\sum_{j_{_{s_{m}}}=1}^{n}\left\vert T(e_{j_{s_{1}}},...,e_{j_{s_{m}}%
})\right\vert ^{q_{m}}\right)  ^{\frac{q_{m-1}}{q_{m}}}\cdots\right)
^{\frac{q_{1}}{q_{2}}}\right)  ^{\frac{1}{q_{1}}}\leq C\left\Vert
T\right\Vert
\]
for all $m$-linear forms $T\colon\ell_{p_{s_{1}}}^{n}\times\cdots\times
\ell_{p_{s_{m}}}^{n}\rightarrow\mathbb{K}$ and all positive integers $n$, then%
\[
\left(  \sum_{j_{s_{k+1}}=1}^{n}\left(  \sum_{j_{_{s_{k+2}}}=1}^{n}%
\cdots\left(  \sum_{j_{_{s_{m}}}=1}^{n}\left\vert A(e_{j_{s_{k+1}}%
},...,e_{j_{s_{m}}})\right\vert ^{q_{m}}\right)  ^{\frac{q_{m-1}}{q_{m}}%
}\cdots\right)  ^{\frac{q_{k+1}}{q_{k+2}}}\right)  ^{\frac{1}{q_{k+1}}}\leq
C\left\Vert A\right\Vert
\]
for all $\left(  m-k\right)  $-linear forms $A\colon\ell_{p_{s_{k+1}}}%
^{n}\times\cdots\times\ell_{p_{s_{m}}}^{n}\rightarrow\mathbb{K}$ and all
positive integers $n$.
\end{lemma}

\begin{proof}
To simplify the notation, we can suppose $\left(  s_{1},...,s_{m}\right)
=\left(  1,...,m\right)  $.

Let suppose that there is a constant $C\geq1$ such that%
\[
\left(  \sum_{j_{1}=1}^{n}\left(  \sum_{j_{_{2}}=1}^{n}\cdots\left(
\sum_{j_{_{m}}=1}^{n}\left\vert T(e_{j_{1}},...,e_{j_{m}})\right\vert ^{q_{m}%
}\right)  ^{\frac{q_{m-1}}{q_{m}}}\cdots\right)  ^{\frac{q_{1}}{q_{2}}%
}\right)  ^{\frac{1}{q_{1}}}\leq C\left\Vert T\right\Vert
\]
for all $m$-linear forms $T\colon\ell_{p_{1}}^{n}\times\cdots\times\ell
_{p_{m}}^{n}\rightarrow\mathbb{K\,}$.

Given a $\left(  m-k\right)  $-linear form $S\colon\ell_{p_{k+1}}^{n}%
\times\cdots\times\ell_{p_{m}}^{n}\rightarrow\mathbb{K}$, we define the
$m$-linear form $T\colon\ell_{p_{1}}^{n}\times\cdots\times\ell_{p_{m}}%
^{n}\rightarrow\mathbb{K},$ given by
\[
T\left(  x^{\left(  1\right)  },x^{\left(  2\right)  },\ldots,x^{\left(
m\right)  }\right)  =x_{1}^{\left(  1\right)  }\cdots x_{1}^{\left(  k\right)
}S\left(  x^{\left(  k+1\right)  },x^{\left(  k+2\right)  },\ldots,x^{\left(
m\right)  }\right)  .
\]
It is obvious that $\left\Vert T\right\Vert =\left\Vert S\right\Vert ;$ then,
by the above assumption there is a constant $C\geq1$ such that
\begin{align*}
&  \left(  \sum_{j_{k+1}=1}^{n}\left(  \sum_{j_{_{k+2}}=1}^{n}\cdots\left(
\sum_{j_{_{m}}=1}^{n}\left\vert S(e_{j_{s_{k+1}}},...,e_{j_{s_{m}}%
})\right\vert ^{q_{m}}\right)  ^{\frac{q_{m-1}}{q_{m}}}\cdots\right)
^{\frac{q_{k+1}}{q_{k+2}}}\right)  ^{\frac{1}{q_{k+1}}}\\
&  =\sup_{\mathbf{i}_{\widehat{\left\{  k+1,...,m\right\}  }}}\left(
\sum_{j_{k+1}=1}^{n}\left(  \sum_{j_{_{k+2}}=1}^{n}\cdots\left(  \sum
_{j_{_{m}}=1}^{n}\left\vert e_{1}^{\left(  1\right)  }\cdots e_{1}^{\left(
k\right)  }S(e_{j_{s_{k+1}}},...,e_{j_{s_{m}}})\right\vert ^{q_{m}}\right)
^{\frac{q_{m-1}}{q_{m}}}\cdots\right)  ^{\frac{q_{k+1}}{q_{k+2}}}\right)
^{\frac{1}{q_{k+1}}}\\
&  =\sup_{\mathbf{i}_{\widehat{\left\{  k+1,...,m\right\}  }}}\left(
\sum_{j_{k+1}=1}^{n}\left(  \sum_{j_{_{k+2}}=1}^{n}\cdots\left(  \sum
_{j_{_{m}}=1}^{n}\left\vert T(e_{j_{1}},...,e_{j_{s_{m}}})\right\vert ^{q_{m}%
}\right)  ^{\frac{q_{m-1}}{q_{m}}}\cdots\right)  ^{\frac{q_{k+1}}{q_{k+2}}%
}\right)  ^{\frac{1}{q_{k+1}}}\\
&  \leq\left(  \sum_{j_{1}=1}^{n}\left(  \sum_{j_{_{2}}=1}^{n}\cdots\left(
\sum_{j_{_{m}}=1}^{n}\left\vert T(e_{j_{1}},...,e_{j_{m}})\right\vert ^{q_{m}%
}\right)  ^{\frac{q_{m-1}}{q_{m}}}\cdots\right)  ^{\frac{q_{1}}{q_{2}}%
}\right)  ^{\frac{1}{q_{1}}}\\
&  \leq C\left\Vert T\right\Vert \\
&  =C\left\Vert S\right\Vert .
\end{align*}

\end{proof}

In the next sections, using Lemma \ref{t2} and Lemma \ref{t0}, we obtain the
super-critical versions of the Hardy--Littlewood inequalities presented in the introduction.

A first natural illustration of the usefulness of Lemma \ref{t2} and Lemma
\ref{t0} lead us to an alternate proof of Proposition 6.3 in Pellegrino et al.
2017. In fact, if $q\in(1,\infty]$, it is well known that
\[
\left(  \sum_{j=1}^{n}\left\vert A\left(  e_{j}\right)  \right\vert
^{a}\right)  ^{\frac{1}{a}}\leq\left\Vert A\right\Vert
\]
for all bounded linear forms $A\colon\ell_{q}\rightarrow\mathbb{K}$, if, and
only if, $a\geq\frac{q}{q-1}.$ Thus, for $a,b\in(0,\infty],$ and
$p,q\in(1,\infty]$ such that $\frac{1}{p}+\frac{1}{q}\geq1$, we invoke Lemma
\ref{t2} and Lemma \ref{t0} to obtain:

\begin{proposition}
\label{pqmaior1} (see Proposition 6.3 in Pellegrino et al. 2017) Let
$p,q\in(1,\infty]$ be such that $\frac{1}{p}+\frac{1}{q}\geq1.$ We have
\[
\left(  \sum_{i=1}^{n}\left(  \sum_{j=1}^{n}\left\vert A(e_{i},e_{j}
)\right\vert ^{a}\right)  ^{\frac{b}{a}}\right)  ^{\frac{1}{b}}\leq\left\Vert
A\right\Vert
\]
for all bilinear forms $A\colon\ell_{p}^{n}\times\ell_{q}^{n}\rightarrow
\mathbb{K}$ and all $n$ if, and only if, the exponents $a,b$ satisfy
\[
b=\infty\text{ and }a\geq\frac{q}{q-1}.
\]

\end{proposition}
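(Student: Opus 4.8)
The plan is to read this proposition off as the $m=2$ case of the two multi-purpose lemmas, with $S=\{2\}$ and $\widehat{S}=\{1\}$, fed by the single-variable fact recalled just before the statement: for $q\in(1,\infty]$, the estimate $(\sum_{j}|\phi(e_j)|^a)^{1/a}\le\|\phi\|$ holds for all linear forms $\phi\colon\ell_q^n\to\mathbb K$ (with constant $1$, for every $n$) if and only if $a\ge q/(q-1)$.

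For the sufficiency ($b=\infty$ and $a\ge q/(q-1)$ imply the estimate), I would first invoke the single-variable fact with $C=1$, which is available precisely because $a\ge q/(q-1)$. Applying Lemma \ref{t2} with $m=2$, $k=1$, $S=\{2\}$ (so that $p_{s_1}=q$) then lifts this to
\[
\sup_{i}\Big(\sum_{j=1}^n|A(e_i,e_j)|^a\Big)^{1/a}\le\|A\|
\]
for all bilinear forms $A\colon\ell_p^n\times\ell_q^n\to\mathbb K$. Since the outer supremum over $i$ is exactly the $\ell_\infty$-norm in the first index, this is the desired inequality with $b=\infty$ and optimal constant $1$.

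For the necessity I would decouple the two conditions. To force $a\ge q/(q-1)$, I would apply Lemma \ref{t0}: reading the bilinear estimate as the $m=2$ nested sum with outer exponent $q_1=b$ and inner exponent $q_2=a$, Lemma \ref{t0} with $k=1$ restricts to the tail variable and returns $(\sum_{j}|A(e_j)|^a)^{1/a}\le\|A\|$ for all linear forms $A\colon\ell_q^n\to\mathbb K$, with the same constant $1$; the single-variable fact then immediately yields $a\ge q/(q-1)$. To force $b=\infty$, I would invoke the \emph{moreover} clause of Lemma \ref{t2}, whose two hypotheses hold in exactly our regime: $\left|1/\mathbf p\right|_{\{2\}}=1/q<1$ (since $q>1$) and $\left|1/\mathbf p\right|_{\{1,2\}}=1/p+1/q\ge1$ (the standing assumption). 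That clause says the supremum in the conclusion above cannot be replaced by any finite $\ell_b$-norm; a bilinear estimate with finite $b$ would be precisely such a forbidden improvement, so $b=\infty$.

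The only genuinely delicate point is bookkeeping rather than mathematics: one must pin down the dictionary between the single sum $\big(\sum_i(\sum_j|\cdot|^a)^{b/a}\big)^{1/b}$ of the proposition and the nested sums of the lemmas (outer exponent $b$, inner exponent $a$, with $b=\infty$ corresponding to the supremum), and verify that the \emph{moreover} hypotheses $1/q<1$ and $1/p+1/q\ge1$ are met. Once this matching is fixed, sufficiency follows from Lemma \ref{t2}, the necessity of $a\ge q/(q-1)$ from Lemma \ref{t0} together with the single-variable fact, and the necessity of $b=\infty$ from the \emph{moreover} clause of Lemma \ref{t2}, with no further computation.
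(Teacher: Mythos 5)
Your proposal is correct and is precisely the paper's own argument: the paper proves this proposition by exactly the route you describe, namely the linear-form fact that $\bigl(\sum_{j}|\phi(e_j)|^{a}\bigr)^{1/a}\le\|\phi\|$ on $\ell_q^n$ iff $a\ge\frac{q}{q-1}$, lifted by Lemma \ref{t2} with $S=\{2\}$ (sufficiency and, via the \emph{moreover} clause under $\frac1q<1$ and $\frac1p+\frac1q\ge1$, necessity of $b=\infty$) together with Lemma \ref{t0} (necessity of $a\ge\frac{q}{q-1}$). Your bookkeeping of the exponent dictionary and of the \emph{moreover} hypotheses is exactly the instantiation the authors intend.
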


\begin{figure}[h]
\begin{tikzpicture}
	
	\draw[->] (0,0) -- (7,0) node[below] {$p$}; 
	\draw[->] (0,0) -- (0,7) node[left] {$q $}; 
	
	\draw[domain=7/6:6/5, color=blue, thick, dotted, smooth] plot (\x,{(\x)/(\x-1)}); 
	
	\draw[domain=6/5:6, color=blue, thick, smooth] plot (\x,{(\x)/(\x-1)}); 

	\draw[domain=6:7, color=blue, thick, dotted, smooth] plot (\x,{(\x)/(\x-1)}); 
	
	\draw (5,2) node[below, color=black!99] {$\frac{1}{p}+\frac{1}{q}=1$}; 
	\draw (0,0) node[left] {$0$};
	\draw (1,0) node[below] {$1$};
	\draw (0,1) node[left] {$1$};
	\draw (2,0) node[below] {$2$};
	\draw (0,2) node[left] {$2$};
	
	\draw[dotted] (1,0) -- (1,1);
	\draw[dotted] (0,1) -- (1,1);
	\draw[color=blue] (1,1) -- (1,6);
	
	\draw[dotted] (1,6) -- (1,7);
	
	\draw[color=blue] (1,1) -- (6,1);
	
	\draw[color=green] (2,2) -- (2,6);
	
	\draw[dotted] (2,6) -- (2,7);
	
	\draw[dotted] (2,0) -- (2,7);
	\draw[dotted] (0,2) -- (7,2);
	
	\draw[dotted] (7,1) -- (7,7);
	\draw[dotted] (1,7) -- (7,7);
	\draw[dotted] (6,1) -- (7,1);
	
	
	\path[draw,shade,bottom color=blue!60,top color=blue!9,opacity=.3] plot [smooth,samples=100, domain=7/6:6] (\x,{(\x)/(\x-1)})--plot (6,6/5)--plot(6,1)--plot (1,1) --plot(1,6)--plot (6/5,6);

	\path[draw,shade,bottom color=blue!99,top color=blue!30,opacity=.1] plot(6/5,7)-- plot [smooth,samples=100, domain=7/6:6/5] (\x,{(\x)/(\x-1)})--plot (6/5,6)--plot(1,6)--plot (1,7) --plot(6/5,7);

	\path[draw,shade,bottom color=blue!99,top color=blue!30,opacity=.1] plot(6,6/5)-- plot(6,1)--plot(7,1)--plot(7,7/6)-- plot [smooth,samples=100, domain=6:7] (\x,{(\x)/(\x-1)})--plot (7,7/6);

	\path[draw,shade, bottom color=red!99,top color=red!30,opacity=.3] plot [smooth,samples=100, domain=6/5:2] (\x,{(\x)/(\x-1)})--plot (2,2)--plot(2,6)--plot (6/5,6);

	\path[draw,shade, bottom color=red!99,top color=red!20,opacity=.1] plot [smooth,samples=100, domain=7/6:6/5] (\x,{(\x)/(\x-1)})--plot (6/5,6)--plot(2,6)--plot (2,7);
	
	\path[draw,shade,bottom color=green!99,top color=green!30,opacity=.3] plot [smooth,samples=100, domain=2:6] (\x,{(\x)/(\x-1)})--plot (6,6/5)--plot(6,6)--plot (2,6) --plot(2,2);

	\path [bottom color=green,top color=green,opacity=.05] (2,7) rectangle (7,6);
	
	\path [bottom color=green,top color=green,opacity=.05] (6,6/5) rectangle (7,6);

	\draw (-1,-1) node[right] {Theorem \ref{azx}};
	\path [bottom color=green,top color=green,opacity=.3] (-1,-0.7) rectangle (-1.5, -1.2);

	\draw (2.5,-1) node[right] {Unknown};
	\path [bottom color=red,top color=red,opacity=.3] (2,-0.7) rectangle (2.5, -1.2);

	\draw (5.3,-1) node[right] {Proposition \ref{pqmaior1}};
	\path [bottom color=blue,top color=blue,opacity=.3] (5.3,-0.7) rectangle (4.7, -1.2);

	\end{tikzpicture}
\caption{Classical bilinear case.}%
\label{ddssdds}%
\end{figure}
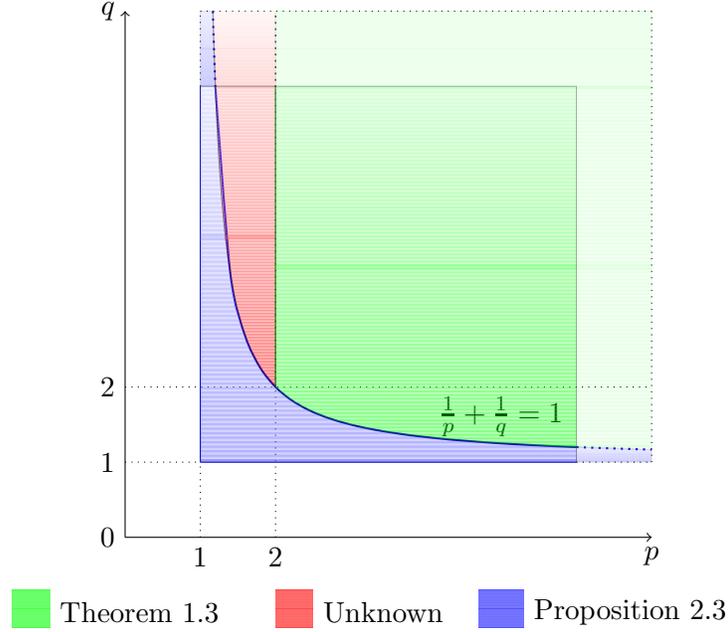



In this section we are mainly interested in the case of $3$-linear forms.

By Theorem \ref{criticodjair} used for $3$-linear forms we have%
\begin{equation}
\sup_{j_{1}}\left(  \sum_{j_{2}=1}^{n}\left(  \sum_{j_{3}=1}^{n}\left\vert
T\left(  e_{j_{1}},e_{j_{2}},e_{j_{3}}\right)  \right\vert ^{q_{3}}\right)
^{\frac{q_{2}}{q_{3}}}\right)  ^{\frac{1}{q_{2}}}\leq\sqrt{2}\left\Vert
T\right\Vert \label{zzs}%
\end{equation}
for all $3$-linear forms $T:\ell_{3}^{n}\times\ell_{3}^{n}\times\ell_{3}%
^{n}\rightarrow\mathbb{K},$ and all positive integers $n$, with $q_{2}=3$ and
$q_{3}=12/5$. Moreover, $q_{1}=\infty$ and $q_{2}=3$ are sharp and the optimal
exponent $q_{3}$ satisfying (\ref{zzs}) fulfill $q_{3}\geq3/2.$

As a consequence of Lemma \ref{t2} and Lemma \ref{t0}, we complete the above result.

\begin{proposition}
\label{444}\ Let $p,r\in\left(  1,\infty\right)  $ and $q\in[2,\infty]$ be
such that $\frac{1}{q}+\frac{1}{r}<1$ and $\frac{1}{p}+\frac{1}{q}+\frac{1}%
{r}\geq1$. The following assertions are equivalent:

\begin{itemize}
\item[(a)] There is a constant $C\geq1$ such that
\[
\left(  \sum_{j_{1}=1}^{n}\left(  \sum_{j_{2}=1}^{n}\left(  \sum_{j_{3}=1}%
^{n}\left\vert T\left(  e_{j_{1}},e_{j_{2}},e_{j_{3}}\right)  \right\vert
^{q_{3}}\right)  ^{\frac{q_{2}}{q_{3}}}\right)  ^{\frac{q_{1}}{q_{2}}}\right)
^{\frac{1}{q_{1}}}\leq C\left\Vert T\right\Vert \label{71}%
\]
for every $3$-linear forms $T:\ell_{p}^{n}\times\ell_{q}^{n}\times\ell_{r}%
^{n}\rightarrow\mathbb{K}$ and all $n$.

\item[(b)] The exponents $a,b,c$ satisfy
\[
q_{1}=\infty,~q_{2}\geq\frac{1}{1-\left(  \frac{1}{r}+\frac{1}{q}\right)
},\text{ }q_{3}\geq\frac{r}{r-1},
\]
and
\[
\frac{1}{q_{2}}+\frac{1}{q_{3}}\leq\frac{3}{2}-\left(  \frac{1}{r}+\frac{1}%
{q}\right)  .
\]

\end{itemize}
\end{proposition}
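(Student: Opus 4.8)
The plan is to split the three variables into the subcritical pair $\{2,3\}$ and the single super-critical variable $1$, transferring between the bilinear and the $3$-linear settings via Lemma \ref{t0} and Lemma \ref{t2}. The hypothesis $\frac1q+\frac1r<1$ makes the bilinear problem on $(\ell_q,\ell_r)$ subcritical, hence governed by Theorem \ref{azx}, whereas $\frac1p+\frac1q+\frac1r\ge1$ is precisely the obstruction that forces $q_1=\infty$.

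For $(b)\Rightarrow(a)$, I would first apply Theorem \ref{azx} to bilinear forms on $\ell_q^n\times\ell_r^n$, with the theorem's outer space $\ell_p$, inner space $\ell_q$, inner exponent $a$ and outer exponent $b$ taken to be our $\ell_q$, $\ell_r$, $q_3$ and $q_2$, respectively; the three conditions on $q_2,q_3$ in $(b)$ are then exactly condition $(b)$ of Theorem \ref{azx} (its hypothesis $\frac1p+\frac1q<1$ becoming our standing assumption $\frac1q+\frac1r<1$). Theorem \ref{azx} thus produces a constant $C$ with
\[
\left(\sum_{j_2=1}^n\left(\sum_{j_3=1}^n\left|A(e_{j_2},e_{j_3})\right|^{q_3}\right)^{\frac{q_2}{q_3}}\right)^{\frac{1}{q_2}}\le C\|A\|
\]
for all bilinear $A\colon\ell_q^n\times\ell_r^n\to\mathbb K$ and all $n$. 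Applying Lemma \ref{t2} with $S=\{2,3\}$, so that $\widehat S=\{1\}$, lifts this to the $3$-linear inequality carrying a supremum over $j_1$, which is exactly $(a)$ with $q_1=\infty$.

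For $(a)\Rightarrow(b)$ I would treat the two parts separately. For the conditions on $q_2,q_3$, apply Lemma \ref{t0} to $(a)$ with $k=1$: its reduction $T(x^{(1)},x^{(2)},x^{(3)})=x_1^{(1)}S(x^{(2)},x^{(3)})$ collapses the outer index $j_1$ to a single term (regardless of whether $q_1$ is finite), and since $\|T\|=\|S\|$ one recovers the displayed bilinear inequality for all $A\colon\ell_q^n\times\ell_r^n\to\mathbb K$; the converse direction of Theorem \ref{azx} then forces $q_3\ge\frac{r}{r-1}$, $q_2\ge\frac{1}{1-(\frac1r+\frac1q)}$ and $\frac{1}{q_2}+\frac{1}{q_3}\le\frac32-(\frac1r+\frac1q)$. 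For $q_1=\infty$ I would argue by contradiction: were $q_1<\infty$, put $\rho=\max\{q_1,q_2,q_3\}<\infty$, so that by monotonicity of the $\ell_q$-norms the mixed inequality $(a)$ would imply
\[
\left(\sum_{j_1,j_2,j_3=1}^n\left|T(e_{j_1},e_{j_2},e_{j_3})\right|^{\rho}\right)^{\frac1\rho}\le C\|T\|,
\]
which is impossible in the super-critical range $\frac1p+\frac1q+\frac1r\ge1$, as recalled in the Introduction. Hence $q_1=\infty$.

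The routine part is the exponent bookkeeping in the identification with Theorem \ref{azx} and the observation that Lemma \ref{t0} is insensitive to the value of $q_1$. The one delicate point is the direction $q_1=\infty$: one must check that the passage to $\rho=\max\{q_1,q_2,q_3\}$ through norm monotonicity is legitimate and that the resulting isotropic estimate genuinely contradicts super-criticality. This is exactly the mechanism internal to the ``moreover'' part of Lemma \ref{t2}, whose extra hypotheses $\left|\frac1{\mathbf p}\right|_{\{2,3\}}<1$ and $\left|\frac1{\mathbf p}\right|_{\{1,2,3\}}\ge1$ are satisfied here; so that clause could instead be cited verbatim to conclude that the supremum cannot be improved.
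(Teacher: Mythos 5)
Your proof is correct and takes essentially the same route as the paper's: you settle the bilinear subproblem on $\ell_{q}^{n}\times\ell_{r}^{n}$ via Theorem \ref{azx}, reduce $(a)\Rightarrow(b)$ with Lemma \ref{t0}, and use Lemma \ref{t2} --- including its \emph{moreover} clause, whose hypotheses $\left\vert \frac{1}{\mathbf{p}}\right\vert _{\{2,3\}}<1$ and $\left\vert \frac{1}{\mathbf{p}}\right\vert _{\{1,2,3\}}\geq1$ you correctly verify --- both to lift the inequality and to force $q_{1}=\infty$. The paper's proof is a compressed statement of exactly these steps, so your write-up merely supplies the bookkeeping it leaves implicit.
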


\begin{proof}
Since $\frac{1}{q}+\frac{1}{r}<1$, by Theorem \ref{azx} there is a constant
$C\geq1$ such that%
\[
\left(  \sum_{j_{2}=1}^{n}\left(  \sum_{j_{3}=1}^{n}\left\vert A\left(
e_{j_{2}},e_{j_{3}}\right)  \right\vert ^{q_{3}}\right)  ^{\frac{q_{2}}{q_{3}%
}}\right)  ^{\frac{1}{q_{2}}}\leq C\left\Vert A\right\Vert
\]
for all bilinear forms $A\colon\ell_{q}^{n}\times\ell_{r}^{n}\rightarrow
\mathbb{K}$ if, and only if,%
\[
q_{3}\geq\frac{r}{r-1},q_{2}\geq\frac{1}{1-\left(  \frac{1}{r}+\frac{1}%
{q}\right)  }.
\]
and
\[
\frac{1}{q_{3}}+\frac{1}{q_{2}}\leq\frac{3}{2}-\left(  \frac{1}{r}+\frac{1}%
{q}\right)  .
\]
We combine this equivalence with the fact $\frac{1}{q}+\frac{1}{r}<1$ and
$\frac{1}{p}+\frac{1}{q}+\frac{1}{r}\geq1$, and then, we invoke Lemma \ref{t0}
and Lemma \ref{t2} to conclude the proof.
\end{proof}

\begin{corollary}
\label{trilinear} For all $3$-linear forms $T:\ell_{3}^{n}\times\ell_{3}%
^{n}\times\ell_{3}^{n}\rightarrow\mathbb{K}$ and all $n$, we have
\[
\left(  \sum_{j_{1}=1}^{n}\left(  \sum_{j_{2}=1}^{n}\left(  \sum_{j_{3}=1}%
^{n}\left\vert T\left(  e_{j_{1}},e_{j_{2}},e_{j_{3}}\right)  \right\vert
^{q_{3}}\right)  ^{\frac{q_{2}}{q_{3}}}\right)  ^{\frac{q_{1}}{q_{2}}}\right)
^{\frac{1}{q_{1}}}\leq C\left\Vert T\right\Vert
\]
if, and only if, $q_{1}=\infty,$ $q_{2}\geq3,~q_{3}\geq3/2,$ and $\frac
{1}{q_{2}}+\frac{1}{q_{3}}\leq\frac{5}{6}$.
\end{corollary}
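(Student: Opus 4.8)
The Corollary is the special case of Proposition 4.6 obtained by setting $p=q=r=3$.The plan is to recognize that this Corollary is precisely the specialization of Proposition \ref{444} to the symmetric choice $p=q=r=3$, so that the whole argument reduces to checking that these values satisfy the hypotheses of that proposition and then simplifying the resulting exponent inequalities to the explicit numbers in the statement.

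First I would verify the two standing assumptions of Proposition \ref{444}. With $q=r=3$ we have $\frac{1}{q}+\frac{1}{r}=\frac{2}{3}<1$, and with $p=q=r=3$ we have $\frac{1}{p}+\frac{1}{q}+\frac{1}{r}=1\geq 1$; moreover $p,r\in(1,\infty)$ and $q\in[2,\infty]$ as required. Thus both inequalities in the hypothesis hold and Proposition \ref{444} applies verbatim, making the equivalence (a)$\iff$(b) available for free.

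Next I would read off condition (b) of Proposition \ref{444} at these values. The forced equality $q_{1}=\infty$ is unchanged. The lower bound on $q_{2}$ becomes $\frac{1}{1-\left(\frac{1}{r}+\frac{1}{q}\right)}=\frac{1}{1-\frac{2}{3}}=3$, and the lower bound on $q_{3}$ becomes $\frac{r}{r-1}=\frac{3}{2}$. Finally the summability constraint $\frac{1}{q_{2}}+\frac{1}{q_{3}}\leq\frac{3}{2}-\left(\frac{1}{r}+\frac{1}{q}\right)$ collapses to $\frac{1}{q_{2}}+\frac{1}{q_{3}}\leq\frac{3}{2}-\frac{2}{3}=\frac{5}{6}$. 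These are exactly the conditions claimed, so the Corollary follows.

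There is essentially no obstacle here beyond bookkeeping: the content is already carried by Proposition \ref{444} (which in turn rests on Theorem \ref{azx} together with Lemma \ref{t0} and Lemma \ref{t2}), and the only work is the arithmetic specialization. If one preferred a self-contained derivation, the same conclusion could be reached directly by applying Theorem \ref{azx} to the bilinear form in the variables $j_{2},j_{3}$ on $\ell_{3}^{n}\times\ell_{3}^{n}$, and then lifting to the trilinear setting via Lemma \ref{t0} and Lemma \ref{t2}, using $\frac{1}{q}+\frac{1}{r}<1$ together with $\frac{1}{p}+\frac{1}{q}+\frac{1}{r}\geq 1$ to force the outermost exponent to $q_{1}=\infty$.
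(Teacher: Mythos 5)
Your proposal is correct and matches the paper's intended argument exactly: the paper states Corollary \ref{trilinear} immediately after Proposition \ref{444} as its specialization to $p=q=r=3$, and your verification of the hypotheses ($\frac{1}{q}+\frac{1}{r}=\frac{2}{3}<1$, $\frac{1}{p}+\frac{1}{q}+\frac{1}{r}=1\geq 1$) together with the arithmetic yielding $q_{1}=\infty$, $q_{2}\geq 3$, $q_{3}\geq\frac{3}{2}$ and $\frac{1}{q_{2}}+\frac{1}{q_{3}}\leq\frac{5}{6}$ is exactly what is needed.
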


\begin{figure}[tbh]
\begin{tikzpicture}
	
	\draw[->] (1,1) -- (8,1) node[below] {$q_2$}; 
	\draw[->] (1,1) -- (1,8) node[left] {$q_3$}; 
	
	\draw[domain=3:6, color=blue, thick, smooth] plot (\x,{(6*\x)/(5*\x-6)}); 

	\draw (4.3,2.6) node[below, color=black!99] {$\frac{1}{q_2}+\frac{1}{q_3}=\frac{5}{6}$}; 
	\draw (1,1) node[left] {$1$};
	\draw (1,1) node[below] {$1$};
	\draw (1,2) node[left] {$2$};
	\draw (2,1) node[below] {$2$};
	\draw (1,3/2) node[left] {$3/2$};
		\draw (3,1) node[below] {$3$};
			\draw (6,1) node[below] {$6$};	
	
	\draw[dotted] (1,3/2) -- (6,3/2);
	\draw[dotted] (3,1) -- (3,8);
	
\draw [color=blue] (3,2) -- (3,7);
\draw [color=blue] (6,3/2) -- (7,3/2);
	\draw[dotted] (7,3/2) -- (8,3/2);
	
		\draw[dotted] (0,2) -- (3,2);
				\draw[dotted] (6,1) -- (6,3/2);
				

	\path[ draw,bottom color=blue!99,top color=blue!30,opacity=.3] plot [smooth,samples=100, domain=3:6] (\x,{(6*\x)/(5*\x-6)})--plot(7,3/2)--plot(7,7)--plot(3,7);
	
	\path[ draw,bottom color=blue!99,top color=blue!30,opacity=.1] plot(7,3/2) --plot(8,3/2)--plot(8,8)--plot(3,8)--plot(3,7)--plot(7,7);

	\draw (1.3,-0.1) node[right] {Admissible exponents for the critical 3-linear case};
	\path [bottom color=blue,top color=blue,opacity=.3] (1.3,-0.3) rectangle (0.7, 0.2);
	
	\end{tikzpicture}
\caption{Critical 3-linear case.}%
\label{figuratri}%
\end{figure}

\section{The $m$-linear case}

Now we use Lemma \ref{t2} and Lemma \ref{t0} to obtain super-critical versions
of Hardy--Littlewood inequalities for $m$-linear forms. Our main result is the
following Theorem. Below, we use the notation $\lceil x\rceil$ to represent
the smallest integer greater than to $x$, i.e., $\lceil x\rceil=\min
\,\{n\in\mathbb{Z}\mid n>x\}.$

\begin{theorem}
\label{dimantgeral} Let $m\geq2$ be a positive integer, $p\in(1,2m]$,
$k:=\max\{0,\lceil m-p\rceil\}$ and $A=\{i\in\{1,...,m-1\}: i\leq k\}$. Then,
there is a constant $C\geq1$ such that
\[
\sup_{j_{i}, i \in A}\left(  \sum_{j_{k+1},...j_{m}=1}^{n}\left\vert
T(e_{j_{1}},...,e_{j_{m}})\right\vert ^{q}\right)  ^{\frac{1}{q}}\leq
C\left\Vert T\right\Vert
\]
for every $m$-linear forms $T:\ell_{p}^{n}\times\cdots\times\ell_{p}
^{n}\rightarrow\mathbb{K}$ if, and only if,
\[
q\geq\frac{p}{p-\left(  m-k\right)  }.
\]
Moreover, the $\sup$ cannot be improved.
\end{theorem}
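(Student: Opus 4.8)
The plan is to reduce the $m$-linear statement to a single-exponent (isotropic) Hardy--Littlewood inequality for $(m-k)$-linear forms, for which the sharp summability exponent is already on record, and then to transfer it back and forth with Lemma \ref{t2} and Lemma \ref{t0}. The genuinely delicate point, and the one I would settle first, is the arithmetic hidden in the definition of $k$. With the strict ceiling one has $\lceil x\rceil=\lfloor x\rfloor+1$, so $k=\max\{0,\lfloor m-p\rfloor+1\}$; a direct computation then gives $m-k=m$ when $p>m$ and $m-k=\lceil p\rceil-1$ (standard ceiling) when $1<p\le m$. Using only $p\in(1,2m]$, one checks in every case that
\[
\tfrac12\le\frac{m-k}{p}<1 .
\]
This is exactly the Dimant--Sevilla-Peris regime for the $m-k$ ``summed'' variables, so the optimal isotropic exponent for $(m-k)$-linear forms on $\ell_p$ is precisely $\dfrac{1}{1-\frac{m-k}{p}}=\dfrac{p}{p-(m-k)}$. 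I expect the verification of the displayed double inequality, especially at integer values of $p$ and at the endpoints $p=2$, $p=m$, $p=2m$ where the nonstandard strict ceiling matters, to be the only real obstacle; everything afterward is a mechanical use of the two lemmas together with the quoted theorems.

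For the sufficiency direction ($q\ge\frac{p}{p-(m-k)}$ implies the inequality), I would set $S=\{k+1,\dots,m\}$, so that $\widehat S=\{1,\dots,k\}=A$. When $m-k\ge2$, inequality (\ref{bvf}) supplies the isotropic inequality with exponent $\frac{p}{p-(m-k)}$ for $(m-k)$-linear forms $\ell_p^n\times\cdots\times\ell_p^n\to\mathbb K$; when $m-k=1$, i.e. $p\in(1,2]$, I would instead quote the elementary duality fact for linear forms recalled just before Proposition \ref{pqmaior1}. By monotonicity of the $\ell_q$-norms the inequality persists for every $q\ge\frac{p}{p-(m-k)}$. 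I then apply Lemma \ref{t2} to the set $S$, with all inner exponents equal to $q$, which lifts the estimate to arbitrary $m$-linear forms $T$ and produces exactly the supremum over $j_i$, $i\in A$, on the left-hand side.

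For the necessity, I would run the same reduction in reverse. Assuming the displayed $m$-linear inequality holds for some $q$, I invoke Lemma \ref{t0} with the first $k$ exponents read as suprema ($q_1=\cdots=q_k=\infty$) and the last $m-k$ equal to $q$; this yields the isotropic inequality with exponent $q$ for $(m-k)$-linear forms on $\ell_p$. The sharpness of the relevant exponent (Dimant--Sevilla-Peris when $m-k\ge2$, and linear duality when $m-k=1$) then forces $q\ge\frac{p}{p-(m-k)}$, which closes the equivalence.

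Finally, for the \emph{moreover} assertion I would appeal to the optimality part of Lemma \ref{t2}. Its hypotheses are met here: $\left\vert\frac1{\mathbf p}\right\vert_{S}=\frac{m-k}{p}<1$ by the arithmetic above, while for every $j\in\widehat S$ one has $\left\vert\frac1{\mathbf p}\right\vert_{S\cup\{j\}}=\frac{m-k+1}{p}=\frac{\lceil p\rceil}{p}\ge1$ since $p\le\lceil p\rceil=m-k+1$. Consequently none of the $k$ suprema can be replaced by a finite $\ell_r$-summation, so the $\sup$ cannot be improved. When $k=0$ there are no suprema and this last step is vacuous, the statement reducing to the plain isotropic inequality for $m$-linear forms on $\ell_p$ with $m<p\le2m$.
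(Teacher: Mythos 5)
Your proposal is correct and takes essentially the same route as the paper's own proof: the same reduction to the isotropic Dimant--Sevilla-Peris inequality (\ref{bvf}) for $(m-k)$-linear forms, transferred back and forth via Lemma \ref{t2} with $S=\{k+1,\dots,m\}$ and Lemma \ref{t0}, together with the same arithmetic observation that the strict-ceiling definition of $k$ forces $p\in(m-k,m-k+1]$, hence $\tfrac12\le\frac{m-k}{p}<1$ and $\frac{m-k+1}{p}\ge1$ (the latter feeding the optimality clause of Lemma \ref{t2} for the \emph{moreover} part). Your only deviations --- handling $m-k=1$ by linear duality (which coincides with the exponent from (\ref{bvf})) and spelling out the ceiling bookkeeping case by case --- are harmless refinements of the identical argument.
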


\begin{proof}
The case $k=0$ is precisely (\ref{bvf}), so we shall assume $k\geq1$. Since
$p\in(m-k,m-k+1]$ we have
\[
\frac{1}{m-k+1}\leq\frac{1}{p}<\frac{1}{m-k}%
\]
and thus
\[
\frac{m-k}{m-k+1}\leq\frac{m-k}{p}<1.
\]
On the other hand we also have
\[
1\leq\frac{m-k+1}{p}.
\]
By (\ref{bvf}) there is a constant $C\geq1$ such that
\[
\left(  \sum_{j_{k+1},...j_{m}=1}^{n}\left\vert T(e_{j_{k+1}},...,e_{j_{k}%
})\right\vert ^{q}\right)  ^{\frac{1}{q}}\leq C\left\Vert T\right\Vert
\]
for every $\left(  m-k\right)  $-linear forms $T:\ell_{p}^{n}\times
\cdots\times\ell_{p}^{n}\rightarrow\mathbb{K}$ if, and only if,%
\[
q\geq\frac{p}{p-\left(  m-k\right)  }.
\]
By Lemma \ref{t2} with $S=\{k+1,k+2,\dots,m\}\subset\{1,\dots,m\}$, and Lemma
\ref{t0} we conclude the proof.
\end{proof}

We finish this section with some super-critical results in the anisotropic
setting, whose proofs we omit. We begin with a super-critical version of
Theorem \ref{aron}:

\begin{theorem}
Let $m\geq2$, $k\in\left\{  1,...,m-1\right\}  $, $p_{1},...,p_{k}\in
[1,\infty]$, $p_{k+1},...,p_{m-1}\in(2,\infty]$ and $p_{m}\in(1,2],$ such
that
\[
\frac{1}{p_{k+1}}+\cdots+\frac{1}{p_{m}}<1
\]
and
\[
\frac{1}{p_{j}}+\frac{1}{p_{k+1}}+\cdots+\frac{1}{p_{m}}\geq1
\]
for all $j\in\{1,...,k\}.$ The following assertions are equivalent:

\begin{itemize}
\item[$(a)$] There is a constant $C\geq1$ such that
\[
\left(  \sum_{j_{1}=1}^{n}\left(  \sum_{j_{2}=1}^{n}\cdots\left(  \sum
_{j_{m}=1}^{n}\left\vert T(e_{j_{1}},...,e_{j_{m}})\right\vert ^{q_{m}%
}\right)  ^{\frac{q_{m-1}}{q_{m}}}\cdots\right)  ^{\frac{q_{1}}{q_{2}}%
}\right)  ^{\frac{1}{q_{1}}}\leq C\left\Vert A\right\Vert
\]
for all $m$-linear forms $T:\ell_{p}^{n}\times\cdots\times\ell_{p}%
^{n}\rightarrow\mathbb{K}$ and all $n.$

\item[$(b)$] The exponents satisfy
\[
q_{1}=\cdots=q_{k}=\infty~\mbox{and}~ q_{i}\geq\dfrac{1}{1-\left(  \frac
{1}{p_{i}}+\cdots+\frac{1}{p_{m}}\right)  }, i=k+1,...,m.
\]

\end{itemize}
\end{theorem}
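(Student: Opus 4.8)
The plan is to treat the block of ``tail'' indices $S=\{k+1,\dots,m\}$ (so that $\widehat{S}=\{1,\dots,k\}$) by an already known Hardy--Littlewood inequality and then transport the estimate to the full $m$-linear setting by means of the two lemmas. The point is that, in the present notation, the standing hypotheses are exactly
\[
\left\vert \tfrac{1}{\mathbf{p}}\right\vert _{S}=\tfrac{1}{p_{k+1}}+\cdots+\tfrac{1}{p_{m}}<1\quad\text{and}\quad\left\vert \tfrac{1}{\mathbf{p}}\right\vert _{S\cup\{j\}}=\tfrac{1}{p_{j}}+\tfrac{1}{p_{k+1}}+\cdots+\tfrac{1}{p_{m}}\ge 1\ \ (j\in\widehat{S}),
\]
which are precisely the conditions needed to invoke both the sufficiency and the optimality (``moreover'') clauses of Lemma \ref{t2}. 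Moreover, since $1<p_{m}\le 2<p_{k+1},\dots,p_{m-1}$, the indices of $S$ fall under the hypotheses of Theorem \ref{aron} applied to $m-k$ variables whenever $m-k\ge 2$; when $k=m-1$ the block $S$ is a single index and the relevant base estimate is instead the elementary one-dimensional fact that $(\sum_{j}|A(e_{j})|^{q_{m}})^{1/q_{m}}\le\Vert A\Vert$ holds iff $q_{m}\ge p_{m}/(p_{m}-1)$.

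For $(b)\Rightarrow(a)$ I would start from the tail. The bounds $q_{i}\ge 1/(1-(\tfrac{1}{p_{i}}+\cdots+\tfrac{1}{p_{m}}))$, $i=k+1,\dots,m$, are exactly those that Theorem \ref{aron} (or the linear fact when $m-k=1$) produces, giving a constant $C\ge 1$ with
\[
\left(  \sum_{j_{k+1}=1}^{n}\left(  \cdots\left(  \sum_{j_{m}=1}^{n}\left\vert A(e_{j_{k+1}},\dots,e_{j_{m}})\right\vert ^{q_{m}}\right)^{\frac{q_{m-1}}{q_{m}}}\cdots\right)^{\frac{q_{k+1}}{q_{k+2}}}\right)^{\frac{1}{q_{k+1}}}\le C\left\Vert A\right\Vert
\]
for all $(m-k)$-linear forms $A\colon\ell_{p_{k+1}}^{n}\times\cdots\times\ell_{p_{m}}^{n}\to\mathbb{K}$. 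Applying Lemma \ref{t2} with this $S$ yields the same expression for $m$-linear $T$ with a supremum over the coordinates of $\widehat{S}$; since $q_{1}=\cdots=q_{k}=\infty$ turns the outer $\ell_{q_{i}}$-norms into suprema, this is exactly $(a)$.

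For the converse $(a)\Rightarrow(b)$ I would argue in two stages. First, feeding $(a)$ into Lemma \ref{t0} (discarding the prefix $\{1,\dots,k\}$) reproduces the displayed tail inequality for $(m-k)$-linear forms with the same exponents $q_{k+1},\dots,q_{m}$; the necessity half of Theorem \ref{aron} (resp.\ of the linear fact) then forces the lower bounds on $q_{k+1},\dots,q_{m}$. Second, to see that each $q_{j}$ with $j\le k$ must equal $\infty$, I would run the optimality argument of Lemma \ref{t2}: assuming some $q_{j}<\infty$, I first replace the remaining outer exponents $q_{i}$ ($i\le k$, $i\ne j$) by $\infty$, which only decreases the left-hand side by monotonicity of the $\ell_{q}$-norms, and then push the finite $\ell_{q_{j}}$-norm inward past the intervening suprema via a Minkowski-type inequality, again decreasing the left-hand side, arriving at precisely the configuration treated in the ``moreover'' part of Lemma \ref{t2}. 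Lemma \ref{t0} applied to $\{j\}\cup S$ followed by passing to $\rho=\max\{q_{j},q_{k+1},\dots,q_{m}\}<\infty$ then yields a fully $\ell_{\rho}$-summing estimate for $(m-k+1)$-linear forms, contradicting $\tfrac{1}{p_{j}}+\tfrac{1}{p_{k+1}}+\cdots+\tfrac{1}{p_{m}}\ge 1$.

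I expect the only genuinely delicate point to be this last stage: one must justify the two monotonicity/Minkowski reorderings that bring an arbitrary finite outer exponent into the position handled by Lemma \ref{t2}, and then check that the resulting summing exponent $\rho$ is finite, so that the super-critical condition $\left\vert \tfrac{1}{\mathbf{p}}\right\vert _{S\cup\{j\}}\ge 1$ can be used to reach a contradiction. Everything else is a transcription of the arguments already carried out for Proposition \ref{444} and Theorem \ref{dimantgeral}.
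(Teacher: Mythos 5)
The paper gives no proof of this theorem (it is one of the results ``whose proofs we omit''), but the route the authors indicate is exactly yours: apply Theorem~\ref{aron} (or, for $k=m-1$, the elementary linear fact) to the tail block $S=\{k+1,\dots,m\}$, transport the estimate with Lemma~\ref{t2} for $(b)\Rightarrow(a)$, strip the prefix with Lemma~\ref{t0} for the lower bounds on $q_{k+1},\dots,q_m$, and use the ``moreover'' clause of Lemma~\ref{t2} for $q_1=\cdots=q_k=\infty$. Your reductions are sound: replacing the outer exponents $q_i$ ($i\leq k$, $i\neq j$) by $\infty$ decreases the left-hand side, and pulling the remaining suprema outside the $\ell_{q_j}$-norm is the elementary inequality $\sup_{z}\Vert f(\cdot,z)\Vert_{q_j}\leq\Vert\sup_{z}f(\cdot,z)\Vert_{q_j}$, so you do reach the configuration forbidden by the optimality part of Lemma~\ref{t2}.

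The one genuine gap is the point you yourself flagged but did not resolve: the assertion $\rho=\max\{q_j,q_{k+1},\dots,q_m\}<\infty$. Hypothesis $(a)$ does not force the tail exponents to be finite; for instance $(a)$ could a priori hold with some $q_j<\infty$, $j\leq k$, and $q_{k+1}=\cdots=q_m=\infty$, and this case must still be excluded. There your argument becomes vacuous: $\rho=\infty$, the ``fully $\ell_\rho$-summing'' estimate degenerates to $\sup|A|\leq C\Vert A\Vert$, which is trivially true, and monotonicity cannot be used to lower an infinite inner exponent (that goes the wrong way). The fix is cheap and bypasses $\rho$ altogether: after Lemma~\ref{t0} has removed the sup-variables, test the resulting $(m-k+1)$-linear inequality on the diagonal form
\[
A\bigl(x^{(0)},x^{(k+1)},\dots,x^{(m)}\bigr)=\sum_{i=1}^{n}x_{i}^{(0)}x_{i}^{(k+1)}\cdots x_{i}^{(m)} .
\]
Generalized H\"older together with $\frac{1}{p_{j}}+\frac{1}{p_{k+1}}+\cdots+\frac{1}{p_{m}}\geq1$ gives $\Vert A\Vert\leq1$, while the nested left-hand side evaluates on the diagonal coefficient array to $n^{1/q_{j}}$ \emph{whatever} the inner exponents are (finite sums and suprema of the indicator array both give indicators), so $n^{1/q_{j}}\leq C$ forces $q_{j}=\infty$. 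This is precisely the computation behind the paper's introductory remark that $\left\vert\frac{1}{\mathbf{p}}\right\vert_{S\cup\{j\}}\geq1$ forces the exponent $s=\infty$, and it is worth noting that the same implicit finiteness assumption occurs in the paper's own proof of the ``moreover'' clause of Lemma~\ref{t2}, so your patched argument actually tightens that step as well. Everything else in your proposal is correct.
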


Analogously, using Lemma \ref{t2}, Lemma \ref{t0} and Theorem \ref{abps} we have:

\begin{theorem}
\label{t4} Let $p_{1},...,p_{k}\in\lbrack1,2]$ and $p_{k+1},...,p_{m}%
\in\lbrack2,\infty]$ be such that
\[
\frac{1}{p_{k+1}}+\cdots+\frac{1}{p_{m}}\leq\frac{1}{2}%
\]
and%
\[
\frac{1}{p_{j}}+\frac{1}{p_{k+1}}+\cdots+\frac{1}{p_{m}}\geq1
\]
for all $j\in\left\{  1,...,k\right\}  $, and%
\[
q_{k+1},...,q_{m}\in\left[  \frac{1}{1-\left(  \frac{1}{p_{k+1}}+\cdots
+\frac{1}{p_{m}}\right)  },2\right]  .
\]
The following assertions are equivalent:

(a) There is a constant $C$ (not depending on $n)$ such that
\[
\left(  \sum_{j_{1}=1}^{n}\left(  \cdots\left(  \sum_{j_{m}=1}^{n}\left\vert
T\left(  e_{j_{1}},\dots,e_{j_{m}}\right)  \right\vert ^{q_{m}}\right)
^{\frac{q_{m-1}}{q_{m}}}\cdots\right)  ^{\frac{q_{1}}{q_{2}}}\right)
^{\frac{1}{q_{1}}}\leq C\left\Vert T\right\Vert ,
\]
for all $m$-linear forms $T:\ell_{p_{1}}^{n}\times\cdots\times\ell_{p_{m}}%
^{n}\longrightarrow\mathbb{K}$ and all positive integers $n$.

(b) $q_{1}=\cdots=q_{k}=\infty$ and the inequality
\[
\frac{1}{q_{k+1}}+\cdots+\frac{1}{q_{m}}\leq\dfrac{(m-k)+1}{2}-\left(
\frac{1}{p_{k+1}}+\cdots+\frac{1}{p_{m}}\right)
\]
is verified.
\end{theorem}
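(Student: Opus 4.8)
The plan is to reduce Theorem \ref{t4} to the subcritical anisotropic Hardy--Littlewood inequality of Theorem \ref{abps} applied to the last $m-k$ coordinates, and then to transfer the resulting estimate to $m$-linear forms via the two multi-purpose lemmas. First I would fix $S=\{k+1,\dots,m\}$, so that $\widehat{S}=\{1,\dots,k\}$, and check that the hypotheses of the ``moreover'' clause of Lemma \ref{t2} are met: indeed $\left|\frac{1}{\mathbf{p}}\right|_{S}=\frac{1}{p_{k+1}}+\cdots+\frac{1}{p_m}\leq\frac12<1$, while for every $j\in\widehat{S}$ we have $\left|\frac{1}{\mathbf{p}}\right|_{S\cup\{j\}}=\frac{1}{p_j}+\frac{1}{p_{k+1}}+\cdots+\frac{1}{p_m}\geq1$ by assumption. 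These are exactly the two conditions that will later force the suprema over the first $k$ coordinates to be unimprovable.

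For the implication $(b)\Rightarrow(a)$ I would apply Theorem \ref{abps} to $(m-k)$-linear forms on $\ell_{p_{k+1}}^n\times\cdots\times\ell_{p_m}^n$. Since $\frac{1}{p_{k+1}}+\cdots+\frac{1}{p_m}\leq\frac12$ and $q_{k+1},\dots,q_m$ lie in the prescribed interval $\left[\frac{1}{1-(\frac{1}{p_{k+1}}+\cdots+\frac{1}{p_m})},2\right]$, the exponent inequality in $(b)$ is precisely condition $(b)$ of Theorem \ref{abps} with the number of linear slots taken to be $m-k$; hence it is equivalent to the validity of the $(m-k)$-linear mixed-norm estimate on those coordinates. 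Feeding that estimate into Lemma \ref{t2} with the above $S$ produces the sought $m$-linear inequality, in which the coordinates of $\widehat{S}=\{1,\dots,k\}$ appear under suprema, which is exactly the requirement $q_1=\cdots=q_k=\infty$ of assertion $(a)$.

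For the converse $(a)\Rightarrow(b)$ I would run the argument backwards. Given the $m$-linear estimate, Lemma \ref{t0} (freezing the first $k$ variables) yields the $(m-k)$-linear estimate on the coordinates $k+1,\dots,m$, and Theorem \ref{abps} then forces the sum condition $\frac{1}{q_{k+1}}+\cdots+\frac{1}{q_m}\leq\frac{(m-k)+1}{2}-\left(\frac{1}{p_{k+1}}+\cdots+\frac{1}{p_m}\right)$. The remaining claim, that each of $q_1,\dots,q_k$ must equal $\infty$, comes from the optimality part of Lemma \ref{t2}: because $\left|\frac{1}{\mathbf{p}}\right|_{S}<1$ while $\left|\frac{1}{\mathbf{p}}\right|_{S\cup\{j\}}\geq1$ for every $j\in\widehat{S}$, no finite exponent can replace any of the $k$ outer suprema, so indeed $q_1=\cdots=q_k=\infty$.

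I expect the only genuinely delicate point to be the index translation in the application of Theorem \ref{abps}: one must verify carefully that replacing the number of linear slots $m$ by $m-k$ converts condition $(b)$ of Theorem \ref{abps} into the exponent inequality stated here, and that the admissibility interval $\left[\frac{1}{1-(\frac{1}{p_{k+1}}+\cdots+\frac{1}{p_m})},2\right]$ is the correct range for the reduced problem. Apart from this bookkeeping the proof is a straightforward orchestration of Theorem \ref{abps}, Lemma \ref{t2} and Lemma \ref{t0}, with the super-critical hypothesis $\frac{1}{p_j}+\frac{1}{p_{k+1}}+\cdots+\frac{1}{p_m}\geq1$ entering solely to guarantee the sharpness of the suprema; this is why the authors are entitled to omit the details.
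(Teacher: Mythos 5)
Your proposal is correct and coincides with the paper's intended argument: the authors state Theorem \ref{t4} without proof, saying only that it follows ``using Lemma \ref{t2}, Lemma \ref{t0} and Theorem \ref{abps}'', which is exactly the reduction you carry out (Theorem \ref{abps} with $m-k$ slots for the equivalence on $q_{k+1},\dots,q_m$, Lemma \ref{t0} to pass from $m$-linear to $(m-k)$-linear forms, and the ``moreover'' clause of Lemma \ref{t2} to force $q_{1}=\cdots=q_{k}=\infty$). Your checks that $\left\vert \frac{1}{\mathbf{p}}\right\vert _{S}\leq\frac{1}{2}<1$ and $\left\vert \frac{1}{\mathbf{p}}\right\vert _{S\cup\{j\}}\geq1$ for $S=\{k+1,\dots,m\}$, as well as the index bookkeeping in Theorem \ref{abps}, are exactly the details the paper leaves to the reader.
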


The next result shows that it is possible to avoid the condition $\frac
{1}{p_{j}}+\frac{1}{p_{k+1}}+\cdots+\frac{1}{p_{m}}\geq1$, for all
$j\in\left\{  1,...,k\right\}  $:

\begin{theorem}
\label{t5} Let $p_{1},...,p_{k}\in\lbrack1,2]$ and $p_{k+1},...,p_{m}%
\in\lbrack2,\infty]$ be such that
\[
\frac{1}{p_{k+1}}+\cdots+\frac{1}{p_{m}}\leq\frac{1}{2}%
\]
and%
\[
q_{k+1},...,q_{m}\in\left[  \frac{1}{1-\left(  \frac{1}{p_{k+1}}+\cdots
+\frac{1}{p_{m}}\right)  },2\right]
\]
with
\[
\frac{1}{q_{k+1}}+\cdots+\frac{1}{q_{m}}=\dfrac{(m-k)+1}{2}-\left(  \frac
{1}{p_{k+1}}+\cdots+\frac{1}{p_{m}}\right)  .
\]
The following assertions are equivalent:

(a) There is a constant $C$ (not depending on $n)$ such that
\[
\left(  \sum_{j_{1}=1}^{n}\left(  \cdots\left(  \sum_{j_{m}=1}^{n}\left\vert
T\left(  e_{j_{1}},\dots,e_{j_{m}}\right)  \right\vert ^{q_{m}}\right)
^{\frac{q_{m-1}}{q_{m}}}\cdots\right)  ^{\frac{q_{1}}{q_{2}}}\right)
^{\frac{1}{q_{1}}}\leq C\left\Vert T\right\Vert ,
\]
for all $m$-linear forms $T:\ell_{p_{1}}^{n}\times\cdots\times\ell_{p_{m}}%
^{n}\longrightarrow\mathbb{K}$ and all positive integers $n$.

(b) $q_{1}=\cdots=q_{k}=\infty$.
\end{theorem}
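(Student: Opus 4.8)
The plan is to establish the two implications separately: Theorem \ref{abps} and Lemma \ref{t2} give the sufficiency $(b)\Rightarrow(a)$, while the Kahane--Salem--Zygmund (KSZ) inequality gives the necessity $(a)\Rightarrow(b)$. The point of the boundary hypothesis
\[
\frac{1}{q_{k+1}}+\cdots+\frac{1}{q_{m}}=\frac{(m-k)+1}{2}-\left(\frac{1}{p_{k+1}}+\cdots+\frac{1}{p_{m}}\right)
\]
is exactly to make these two estimates match, and this is what lets us dispense with the condition $\frac{1}{p_{j}}+\frac{1}{p_{k+1}}+\cdots+\frac{1}{p_{m}}\geq1$ used in Theorem \ref{t4}.

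For $(b)\Rightarrow(a)$, I would first check that the tail exponents satisfy the hypotheses of Theorem \ref{abps} for $(m-k)$-linear forms on $\ell_{p_{k+1}}^{n}\times\cdots\times\ell_{p_{m}}^{n}$: indeed $\frac{1}{p_{k+1}}+\cdots+\frac{1}{p_{m}}\leq\frac12$, each $q_{i}\in[\frac{1}{1-(1/p_{k+1}+\cdots+1/p_{m})},2]$, and the displayed equality is precisely item $(b)$ of that theorem. Hence there is $C\geq1$ with
\[
\left(\sum_{j_{k+1}=1}^{n}\left(\cdots\left(\sum_{j_{m}=1}^{n}|A(e_{j_{k+1}},\dots,e_{j_{m}})|^{q_{m}}\right)^{\frac{q_{m-1}}{q_{m}}}\cdots\right)^{\frac{q_{k+1}}{q_{k+2}}}\right)^{\frac{1}{q_{k+1}}}\leq C\|A\|
\]
for all such $A$. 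Applying Lemma \ref{t2} with $S=\{k+1,\dots,m\}$ (so $\widehat{S}=\{1,\dots,k\}$) converts this into the supremum estimate over the variables $j_{1},\dots,j_{k}$, which is exactly assertion $(a)$ with $q_{1}=\cdots=q_{k}=\infty$.

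For $(a)\Rightarrow(b)$, I would argue by necessity using KSZ. For each $n$ it produces an $m$-linear form $T$ with unimodular coefficients $|T(e_{j_{1}},\dots,e_{j_{m}})|=1$ and
\[
\|T\|\leq C_{m}\,n^{\frac{m+1}{2}-\sum_{i=1}^{m}\min\left(\frac{1}{p_{i}},\frac12\right)}.
\]
Since all coefficients equal $1$, the left-hand side of $(a)$ evaluated at this $T$ is $n^{\sum_{i=1}^{m}1/q_{i}}$, so the validity of $(a)$ for all $n$ forces
\[
\sum_{i=1}^{m}\frac{1}{q_{i}}\leq\frac{m+1}{2}-\sum_{i=1}^{m}\min\left(\frac{1}{p_{i}},\frac12\right).
\]
Because $p_{1},\dots,p_{k}\in[1,2]$ and $p_{k+1},\dots,p_{m}\in[2,\infty]$, the right-hand side equals $\frac{(m-k)+1}{2}-\left(\frac{1}{p_{k+1}}+\cdots+\frac{1}{p_{m}}\right)$, which by the boundary hypothesis is exactly $\sum_{i=k+1}^{m}1/q_{i}$. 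Cancelling the tail terms leaves $\sum_{i=1}^{k}1/q_{i}\leq0$, and since each $q_{i}>0$ this forces $q_{1}=\cdots=q_{k}=\infty$, which is $(b)$.

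The main obstacle, and really the only delicate point, is the bookkeeping in the necessity half. One must invoke the correct mixed-exponent form of KSZ, with $\min(1/p_{i},1/2)$ in each slot (obtained from the classical case $p_{i}\geq2$ together with the inclusion $B_{\ell_{p_{i}}^{n}}\subseteq B_{\ell_{2}^{n}}$ when $p_{i}\leq2$), and then verify that the boundary equality makes the KSZ exponent collapse precisely onto $\sum_{i=k+1}^{m}1/q_{i}$. It is this exact cancellation---rather than an inequality with slack, as would occur under the weaker hypothesis of Theorem \ref{t4}---that upgrades the estimate to $\sum_{i\leq k}1/q_{i}\leq0$ and yields sharpness without assuming $\frac{1}{p_{j}}+\frac{1}{p_{k+1}}+\cdots+\frac{1}{p_{m}}\geq1$.
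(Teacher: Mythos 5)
Your proof is correct, and the sufficiency half $(b)\Rightarrow(a)$ is exactly the paper's: Theorem \ref{abps} applied to the last $m-k$ variables (the boundary equality satisfies its condition $(b)$ with equality), then Lemma \ref{t2} with $S=\{k+1,\dots,m\}$. The necessity half, however, is organized genuinely differently. The paper peels off one index at a time: assuming $q_k<\infty$, it first uses Lemma \ref{t0} to restrict to $(m-k+1)$-linear forms on $\ell_{p_k}^n\times\ell_{p_{k+1}}^n\times\cdots\times\ell_{p_m}^n$, then swaps the single factor $\ell_{p_k}^n$ for $\ell_2^n$ via the inclusion $B_{\ell_{p_k}^n}\subseteq B_{\ell_2^n}$ (valid since $p_k\leq2$), applies Kahane--Salem--Zygmund in that $(m-k+1)$-linear setting to contradict the boundary equality, and finally disposes of $q_1,\dots,q_{k-1}$ by a separate appeal to the sharpness (``Moreover'') clause of Lemma \ref{t2}, using that $\frac{1}{p_j}+\frac{1}{p_k}\geq1$ because $p_j,p_k\in[1,2]$. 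You instead apply KSZ once, globally, to the full $m$-linear inequality, with the mixed exponent $\sum_{i=1}^m\min\left(\frac{1}{p_i},\frac12\right)$, which you justify by the same ball-inclusion trick the paper uses in one slot, applied in all $k$ small slots simultaneously; your arithmetic is exact, since $\frac{m+1}{2}-\frac{k}{2}-\sum_{i=k+1}^m\frac{1}{p_i}=\frac{(m-k)+1}{2}-\sum_{i=k+1}^m\frac{1}{p_i}$, which the boundary hypothesis identifies with $\sum_{i=k+1}^m\frac{1}{q_i}$, and the left-hand side at the unimodular form is indeed $n^{\sum_{i=1}^m 1/q_i}$ (with $1/\infty=0$), yielding $\sum_{i=1}^k\frac{1}{q_i}\leq0$ in one stroke. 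Your route buys symmetry and economy: all of $q_1,\dots,q_k$ are forced to be $\infty$ at once, no restriction lemma is needed in this direction, and you avoid the ``Moreover'' clause of Lemma \ref{t2}, whose hypothesis $\left\vert\frac{1}{\mathbf{p}}\right\vert_S<1$ can be delicate at the edge (e.g., $p_k=1$ makes $\frac{1}{p_k}+\frac{1}{p_{k+1}}+\cdots+\frac{1}{p_m}\geq1$ possible); the paper's version, in exchange, pinpoints the single index $q_k$ where the contradiction arises and recycles its general-purpose lemmas. Your closing observation is also accurate: it is precisely the exact cancellation at the boundary that replaces the hypothesis $\frac{1}{p_j}+\frac{1}{p_{k+1}}+\cdots+\frac{1}{p_m}\geq1$ of Theorem \ref{t4}, under which the same KSZ computation would leave positive slack and prove nothing about $q_1,\dots,q_k$. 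No gap.
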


\begin{proof}
Suppose that $(a)$ holds and $q_{k}<\infty$. In this case, Lemma \ref{t0}
provides a constant $C$ such that
\[
\left(  \sum_{j_{k}=1}^{n}\left(  \cdots\left(  \sum_{j_{m}=1}^{n}\left\vert
T\left(  e_{j_{1}},\dots,e_{j_{m}}\right)  \right\vert ^{q_{m}}\right)
^{\frac{q_{m-1}}{q_{m}}}\cdots\right)  ^{\frac{q_{k}}{q_{k+1}}}\right)
^{\frac{1}{q_{k}}}\leq C\left\Vert T\right\Vert
\]
for all $\left(  m-k+1\right)  $-linear forms $T:\ell_{p_{k}}^{n}\times
\cdots\times\ell_{p_{m}}^{n}\rightarrow\mathbb{K}$ and all positive integers
$n$. For any $\left(  m-k+1\right)  $-linear form $T:\ell_{p_{k}}^{n}%
\times\cdots\times\ell_{p_{m}}^{n}\rightarrow\mathbb{K}$, we define a $\left(
m-k+1\right)  $-linear form $S$ with the same rule of $T$, but different
domain $\ell_{2}^{n}\times\ell_{p_{k+1}}^{n}\times\cdots\times\ell_{p_{m}}%
^{n}$. So, there is a constant $C$ such that
\begin{align}
\label{2357} &  \left(  \sum_{j_{k}=1}^{n}\left(  \cdots\left(  \sum_{j_{m}%
=1}^{n}\left\vert S\left(  e_{j_{1}},\dots,e_{j_{m}}\right)  \right\vert
^{q_{m}}\right)  ^{\frac{q_{m-1}}{q_{m}}}\cdots\right)  ^{\frac{q_{k}}%
{q_{k+1}}}\right)  ^{\frac{1}{q_{k}}}\\
&  =\left(  \sum_{j_{k}=1}^{n}\left(  \cdots\left(  \sum_{j_{m}=1}%
^{n}\left\vert T\left(  e_{j_{1}},\dots,e_{j_{m}}\right)  \right\vert ^{q_{m}%
}\right)  ^{\frac{q_{m-1}}{q_{m}}}\cdots\right)  ^{\frac{q_{k}}{q_{k+1}}%
}\right)  ^{\frac{1}{q_{k}}}\nonumber\\
&  \leq C\left\Vert T\right\Vert \nonumber\\
&  \leq C\left\Vert S\right\Vert .\nonumber
\end{align}
for all $\left(  m-k+1\right)  $-linear forms $S:\ell_{2}^{n}\times
\ell_{p_{k+1}}^{n}\times\cdots\times\ell_{p_{m}}^{n}\rightarrow\mathbb{K}$,
and the exponents satisfy
\begin{align*}
\frac{1}{q_{k}}+\frac{1}{q_{k+1}}+\cdots+\frac{1}{q_{m}}  &  =\frac{1}{q_{k}%
}+\dfrac{(m-k)+1}{2}-\left(  \frac{1}{p_{k+1}}+\cdots+\frac{1}{p_{m}}\right)
\\
&  >\dfrac{(m-k)+1}{2}-\left(  \frac{1}{p_{k+1}}+\cdots+\frac{1}{p_{m}}\right)
\\
&  =\dfrac{(m-k+1)+1}{2}-\left(  \frac{1}{2}+\frac{1}{p_{k+1}}+\cdots+\frac
{1}{p_{m}}\right)  .
\end{align*}

On the other hand, replacing the unimodular $(m-k+1)$-linear form of the
Kahane--Salem--Zygmund inequality (see Lemma 6.1 in Albuquerque et al. 2014)
in (\ref{2357}), we obtain
\[
n^{\frac{1}{q_{k}}+\cdots+\frac{1}{q_{m}}}\leq C_{m}\cdot n^{\frac
{(m-k+1)+1}{2}-\left(  \frac{1}{2}+\frac{1}{p_{k+1}}+\cdots+\frac{1}{p_{m}%
}\right)  .}%
\]
Since this is valid for all $n$, we conclude that
\[
\frac{1}{q_{k}}+\cdots+\frac{1}{q_{m}}\leq\frac{(m-k+1)+1}{2}-\left(  \frac
{1}{2}+\frac{1}{p_{k+1}}+\cdots+\frac{1}{p_{m}}\right)  ,
\]
and this is a contradiction. Hence $q_{k}=\infty$. Finally, the fact that
$q_{1}=\cdots=q_{k-1}=\infty$ is a consequence of Lemma \ref{t2}, because
\[
\frac{1}{p_{j}}+\frac{1}{p_{k}}+\frac{1}{p_{k+1}}+\cdots+\frac{1}{p_{m}}\geq1
\]
for all $j\in\left\{  1,...,.k-1\right\}  $ (recall that $p_{1},...,p_{k}%
\in\left[  1,2\right]  $).

Finally, using Theorem \ref{abps} and Lemma \ref{t2} we prove that $(b)$
implies $(a)$.
\end{proof}

\begin{remark}
It is worth mentioning that the above theorems are independent. For instance,
if $m=4$, $k=2$, $p_{1}=p_{2}=2$ and $p_{3}=p_{4}=8$, nothing can be inferred
by Theorem \ref{t4}. However, using Theorem \ref{t5}, we conclude that if
$q_{3},q_{4}\in[4/3,2]$ and $\frac{1}{q_{3}}+\frac{1}{q_{4}}=\frac{5}{4}$ then
there is a constant $C$ (not depending on $n)$ such that
\[
\left(  \sum_{j_{1}=1}^{n}\left(  \cdots\left(  \sum_{j_{4}=1}^{n}\left\vert
T\left(  e_{j_{1}},\dots,e_{j_{4}}\right)  \right\vert ^{q_{4}}\right)
^{\frac{q_{3}}{q_{4}}}\cdots\right)  ^{\frac{q_{1}}{q_{2}}}\right)  ^{\frac
{1}{q_{1}}}\leq C\left\Vert T\right\Vert ,
\]
for all $4$-linear forms $T:\ell_{2}^{n}\times\ell_{2}^{n}\times\ell_{8}%
^{n}\times\ell_{8}^{n}\longrightarrow\mathbb{K}$ and all positive integers $n$
if, and only if, $q_{1}=q_{2}=\infty$.
\end{remark}

The following result was proved in Albuquerque \& Rezende 2018 (in Corollary 2):

\begin{theorem}
\label{teo111} (see Corollary 2 in Albuquerque $\&$ Rezende 2018) Let $m$ be a
positive integer and $p_{1},...,p_{m}\in\lbrack1,2m]$ and $\dfrac{1}{p_{1}%
}+\cdots+\dfrac{1}{p_{m}}<1$. Then, there is a constant $C$ (not depending on
$n$) such that
\[
\label{1234}\left(  \sum_{j_{1}=1}^{n}\left(  \cdots\left(  \sum_{j_{m}=1}%
^{n}\left\vert T\left(  e_{j_{1}},\dots,e_{j_{m}}\right)  \right\vert ^{q_{m}%
}\right)  ^{\frac{q_{m-1}}{q_{m}}}\cdots\right)  ^{\frac{q_{1}}{q_{2}}%
}\right)  ^{\frac{1}{q_{1}}}\leq C\left\Vert T\right\Vert
\]
for all $m$-linear forms $T:\ell_{p_{1}}^{n}\times\cdots\times\ell_{p_{m}}%
^{n}\rightarrow\mathbb{K}$ and all positive integers $n,$ with
\[
\dfrac{1}{q_{i}}=\frac{1}{2}+\dfrac{(m-i+1)}{2m}-\left(  \frac{1}{p_{i}%
}+\cdots+\frac{1}{p_{m}}\right)  ,
\]
for all $i=1,....,m.$
\end{theorem}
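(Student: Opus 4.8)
The plan is to obtain the inequality from the standard anisotropic Hardy--Littlewood machinery, viewing the prescribed exponents $q_{i}$ as a distinguished \emph{diagonal} sitting inside the admissible region, and then to check that this diagonal is actually reachable for every distribution of the $p_{i}$ in $[1,2m]$. I would carry this out in three stages. First I would treat the reference case $p_{1}=\cdots=p_{m}=\infty$ and record the foundational mixed $(\ell_{1},\ell_{2},\dots,\ell_{2})$ estimates: for each slot $k$, applying the multilinear Khinchin--Kahane inequality in the $m-1$ variables distinct from $k$ and the duality $\ell_{\infty}^{\ast}=\ell_{1}$ in the $k$-th variable gives $\bigl\|(T(e_{j_{1}},\dots,e_{j_{m}}))\bigr\|_{Y_{k}}\leq c_{m}\|T\|$, where $Y_{k}$ carries the exponent $1$ in slot $k$ and $2$ in every other slot, and $c_{m}$ depends only on $m$. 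Interpolating these $m$ estimates by the Benedek--Panzone mixed-norm interpolation produces the full Bohnenblust--Hille anisotropic family: every nested vector $(s_{1},\dots,s_{m})$ with $s_{i}\in[1,2]$ and $\sum_{i}1/s_{i}\leq (m+1)/2$ is admissible for the $c_{0}$-case.

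Second, I would introduce the domains $\ell_{p_{i}}^{n}$ through a multiplier/duality device. Given $T$ on $\ell_{p_{1}}^{n}\times\cdots\times\ell_{p_{m}}^{n}$ and norming sequences $\lambda^{(i)}\in B_{\ell_{p_{i}}^{n}}$, the form $(x^{(1)},\dots,x^{(m)})\mapsto T(\lambda^{(1)}x^{(1)},\dots,\lambda^{(m)}x^{(m)})$ has $c_{0}$-norm at most $\|T\|$, so the previous step applies to its coefficients $\lambda^{(1)}_{j_{1}}\cdots\lambda^{(m)}_{j_{m}}T(e_{j_{1}},\dots,e_{j_{m}})$. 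Taking the supremum over the $\lambda^{(i)}$ one variable at a time, from the innermost slot outwards, and using the single-index Hölder identity $\tfrac{1}{q_{i}}=\tfrac{1}{s_{i}}-\tfrac{1}{p_{i}}$, converts the $c_{0}$ estimate into the constraint $\sum_{i}1/q_{i}\leq (m+1)/2-\sum_{i}1/p_{i}$. This already settles the theorem when $\sum_{i}1/p_{i}\leq\tfrac12$, where it recovers Theorem~\ref{abps}; there one checks directly that the diagonal $\tfrac{1}{q_{i}}=\tfrac12+\tfrac{m-i+1}{2m}-\sum_{l\geq i}1/p_{l}$ respects the constraint, with equality exactly at the corner $p_{1}=\cdots=p_{m}=2m$ (all $q_{i}=2$).

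Third, and this is where the genuine difficulty lies, I must cover the range $\sum_{i}1/p_{i}\in(\tfrac12,1)$ forced by the constraint $p_{i}\in[1,2m]$ (indeed $p_{i}\leq 2m$ already yields $\sum_{i}1/p_{i}\geq\tfrac12$), which lies beyond Theorem~\ref{abps}: there the clean relation $\tfrac{1}{q_{i}}=\tfrac{1}{s_{i}}-\tfrac{1}{p_{i}}$ leaves no admissible $c_{0}$-vector $(s_{i})$, so the exponents cannot be produced by multipliers alone. The idea is to extrapolate past the Bohnenblust--Hille line by interpolating against the Dimant--Sevilla--Peris estimate (\ref{bvf}), whose isotropic exponent $\tfrac{1}{1-(1/p_{1}+\cdots+1/p_{m})}$ is precisely the outermost value $q_{1}$ of the diagonal. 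Concretely I would invoke the anisotropic regularity principle for nested sequence spaces: starting from the all-$\ell_{2}$ corner on the critical line $\sum_{i}1/p_{i}=\tfrac12$ (coming from (\ref{bv})) together with the fully summed endpoint (\ref{bvf}), coordinatewise interpolation yields a whole sheet of admissible nested vectors, and the telescoping identity $\tfrac{1}{q_{i}}-\tfrac{1}{q_{i+1}}=\tfrac{1}{2m}-\tfrac{1}{p_{i}}$ exhibits the stated $q_{i}$ as lying on that sheet.

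The main obstacle is exactly this last bookkeeping: verifying that the interpolation remains on the prescribed diagonal for \emph{every} admissible choice of the $p_{i}$, rather than merely landing somewhere in the feasible region, and propagating a constant $C$ independent of $n$ through the interpolation. I expect the monotonicity $q_{1}\geq q_{2}\geq\cdots\geq q_{m}\geq 2$ (a direct consequence of $p_{i}\leq 2m$) to be the structural fact that makes the telescoping consistent and keeps each coordinate within the range where the regularity principle can be applied.
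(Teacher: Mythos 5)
You should know at the outset that this paper contains no proof of Theorem \ref{teo111}: it is imported verbatim as Corollary 2 of Albuquerque \& Rezende 2018, where it is deduced from their anisotropic regularity principle. So your attempt must stand on its own, and judged that way it has a genuine gap exactly where you flag the ``main obstacle.'' Stages 1 and 2 are the standard Bohnenblust--Hille and multiplier machinery, and your bookkeeping is accurate: writing $\sigma:=\frac{1}{p_1}+\cdots+\frac{1}{p_m}$, the outermost value $q_1=\frac{1}{1-\sigma}$ is the Dimant--Sevilla--Peris exponent, $p_i\le 2m$ forces $\sigma\ge\frac12$ and $q_1\ge\cdots\ge q_m\ge2$, and the relation $\frac{1}{s_i}=\frac{1}{q_i}+\frac{1}{p_i}$ can exit the cube $[1,2]^m$ coordinatewise (e.g.\ $m=2$, $p_1=4$, $p_2=3/2$ gives $s_1=3$), even though, curiously, the aggregate bound $\sum_i\frac{1}{s_i}\le\frac{m+1}{2}$ always holds on your diagonal. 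One repair you would still owe in stage 2: taking the supremum over the multipliers $\lambda^{(i)}$ ``from the innermost slot outwards'' is not legitimate as stated, because the multiplier attaining $\|a\|_{q_i}=\sup_{\lambda}\|(\lambda_j a_j)_j\|_{s_i}$ at an inner index depends on the outer indices and cannot be pulled out of the nested norm; the known proofs convert the outermost index at each step of an induction (or reorder via Minkowski), which is where ordering restrictions on the exponents enter.

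The fatal problem is stage 3. The two anchors you propose to interpolate hold for forms on \emph{different} domains: the all-$\ell_2$ estimate coming from \eqref{bv} requires $\sigma\le\frac12$, while on the fixed domain with $\sigma\in(\frac12,1)$ it is simply false --- for $p_i\ge2$ the Kahane--Salem--Zygmund form has unimodular coefficients and norm $\lesssim n^{\frac{m+1}{2}-\sigma}$, so the ratio grows like $n^{\sigma-\frac12}$. Hence there is no common class of forms on which two inequalities coexist to be interpolated, and no interpolation functor moves the domain exponents $p_i$ and the range exponents $q_i$ simultaneously: given $T$ bounded only on the intermediate domain, one has no control of its norm in the interpolation space of the two spaces of multilinear forms (the continuous inclusion between $[\mathcal{L}(X_0),\mathcal{L}(X_1)]_\theta$ and $\mathcal{L}([X_0,X_1]_\theta)$ goes the wrong way). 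This is precisely the obstruction that makes the regime $\sigma>\frac12$ hard --- the paper itself records that the anisotropic version of \eqref{bvf} is still open --- and it is why Albuquerque--Rezende proved a regularity principle rather than interpolating. Your fallback, ``invoke the anisotropic regularity principle,'' is therefore circular in effect: that principle, applied slot by slot with its hypotheses verified at each stage and anchored at the all-exponents-$2$ estimate at $p_1=\cdots=p_m=2m$, \emph{is} the proof of the corollary you were asked to establish. Verifying the telescoping identity $\frac{1}{q_i}-\frac{1}{q_{i+1}}=\frac{1}{2m}-\frac{1}{p_i}$ confirms that the stated exponents are consistent with such a scheme, but it does not substitute for stating the principle and running the iteration with an $n$-independent constant.
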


Again, Lemma \ref{t2} and Lemma \ref{t0} combined with the
Kahane--Salem--Zygmund inequality (see Lemma 6.1 in Albuquerque et al. 2014)
and Lemma 3.1 in Aron et al. 2017 give us the following super-critical version
of the Theorem \ref{teo111}:

\begin{theorem}
Let $m\geq2$, $k\in\{1,...,m-1\}$, $p_{1},...,p_{k}\in[1,\infty]$ and
$p_{k+1},...,p_{m}\in(1,2(m-k)]$, be such that%

\[
\frac{1}{p_{k+1}}+\cdots+\frac{1}{p_{m}}<1
\]
and
\[
\frac{1}{p_{j}}+\frac{1}{p_{k+1}}+\cdots+\frac{1}{p_{m}}\geq1,
\]
for all $j\in\{1,...,k\}$. Then
\begin{equation}
\left(  \sum_{j_{1}=1}^{n}\left(  \cdots\left(  \sum_{j_{m}=1}^{n}\left\vert
T\left(  e_{j_{1}},\dots,e_{j_{m}}\right)  \right\vert ^{q_{m}}\right)
^{\frac{q_{m-1}}{q_{m}}}\cdots\right)  ^{\frac{q_{1}}{q_{2}}}\right)
^{\frac{1}{q_{1}}}\leq2^{\frac{m-k-1}{2}}\left\Vert T\right\Vert \label{48}%
\end{equation}
for all $m$-linear forms $T:\ell_{p_{1}}^{n}\times\cdots\times\ell_{p_{m}}%
^{n}\rightarrow\mathbb{K}$ and all positive integers $n$, with $q_{1}%
=\cdots=q_{k}=\infty$ and
\[
\dfrac{1}{q_{i}}=\frac{1}{2}+\dfrac{(m-i+1)}{2(m-k)}-\left(  \dfrac{1}{p_{i}%
}+\cdots+\dfrac{1}{p_{m}}\right)  ,
\]
for all $i=k+1,....,m.$ Moreover, $q_{1}=\cdots=q_{k}=\infty$, and the optimal
exponents $q_{i}$ satisfying (\ref{48}) are such that
\[
q_{i}\geq\frac{1}{1-\left(  \frac{1}{p_{i}}+\cdots+\frac{1}{p_{m}}\right)
},i=k+1,...,m,
\]
and the inequality
\[
\frac{1}{q_{k+1}}+\cdots+\frac{1}{q_{m}}\leq\dfrac{(m-k)+1}{2}-\left(
\frac{1}{p_{k+1}}+\cdots+\frac{1}{p_{m}}\right)
\]
is verified.
\end{theorem}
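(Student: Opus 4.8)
The plan is to push the whole problem down to the $(m-k)$-linear sub-critical regime carried by the last $m-k$ coordinates and then lift it back up with the two multi-purpose lemmas. First I would apply Theorem \ref{teo111} with $m$ replaced by $m-k$ to the exponents $p_{k+1},\dots,p_m$: the hypotheses $\frac{1}{p_{k+1}}+\cdots+\frac{1}{p_m}<1$ and $p_{k+1},\dots,p_m\in(1,2(m-k)]$ are exactly its requirements, the bound $2(m-k)$ playing the role of the ``$2M$'' for $M=m-k$, and its constant is $2^{\frac{(m-k)-1}{2}}$, which matches the constant in (\ref{48}). A routine index shift---relabelling the $i$-th slot of the reduced problem as the $(k+i)$-th slot of the original one---turns the exponents produced by Theorem \ref{teo111} into exactly the stated $q_{k+1},\dots,q_m$, namely $\frac{1}{q_i}=\frac12+\frac{m-i+1}{2(m-k)}-\left(\frac{1}{p_i}+\cdots+\frac{1}{p_m}\right)$ for $i=k+1,\dots,m$.

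Next I would invoke Lemma \ref{t2} with $S=\{k+1,\dots,m\}$, so that $\widehat S=\{1,\dots,k\}$, to promote the $(m-k)$-linear estimate just obtained to the full $m$-linear estimate (\ref{48}), with the same constant and with the first $k$ coordinates appearing as suprema, i.e.\ $q_1=\cdots=q_k=\infty$. This settles the inequality itself. That these suprema cannot be improved is then immediate from the ``moreover'' clause of Lemma \ref{t2}: its two hypotheses $\left|\frac1{\mathbf p}\right|_S<1$ and $\left|\frac1{\mathbf p}\right|_{S\cup\{j\}}\ge1$ for all $j\in\widehat S$ are verbatim the two standing assumptions $\frac{1}{p_{k+1}}+\cdots+\frac{1}{p_m}<1$ and $\frac{1}{p_j}+\frac{1}{p_{k+1}}+\cdots+\frac{1}{p_m}\ge1$.

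For the remaining sharpness assertions I would argue in the necessity direction. Supposing (\ref{48}) holds for some exponents, Lemma \ref{t0} strips off the first $k$ (suprema) coordinates and delivers the corresponding $(m-k)$-linear inequality for $q_{k+1},\dots,q_m$ on $\ell_{p_{k+1}}^n\times\cdots\times\ell_{p_m}^n$. Feeding the unimodular $(m-k)$-linear Kahane--Salem--Zygmund form into this reduced inequality and letting $n\to\infty$ forces the sum bound $\frac{1}{q_{k+1}}+\cdots+\frac{1}{q_m}\le\frac{(m-k)+1}{2}-\left(\frac{1}{p_{k+1}}+\cdots+\frac{1}{p_m}\right)$, exactly as in the argument used for Theorem \ref{t5}, while Lemma 3.1 of Aron et al.\ 2017 applied to the same reduced inequality yields the individual thresholds $q_i\ge\frac{1}{1-(\frac{1}{p_i}+\cdots+\frac{1}{p_m})}$. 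Finally, a direct computation of $\sum_{i=k+1}^m 1/q_i$ from the closed forms above, using that each $p_i\le 2(m-k)$ gives $1/p_i\ge\frac{1}{2(m-k)}$, confirms that the exponents we constructed do obey this sum bound (with equality only at the endpoints $p_i=2(m-k)$); likewise $\frac{1}{q_i}\le 1-(\frac{1}{p_i}+\cdots+\frac{1}{p_m})$ is checked directly, since $i\ge k+1$ makes $\frac12+\frac{m-i+1}{2(m-k)}\le1$.

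I expect the one genuinely delicate point to be the bookkeeping in the first step: verifying that the index shift lines up the closed-form exponents of the reduced problem with the stated $q_i$, and confirming that $p_i\in(1,2(m-k)]$ is precisely the range required to invoke Theorem \ref{teo111} for $M=m-k$. Once that alignment is nailed down, every other step is a direct application of Lemma \ref{t2}, Lemma \ref{t0}, the Kahane--Salem--Zygmund inequality, and the cited lemma of Aron et al.
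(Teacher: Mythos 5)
Your proposal is correct and is essentially the paper's own argument: the paper offers no written proof of this theorem beyond the remark that it follows from Lemma \ref{t2} and Lemma \ref{t0} combined with the Kahane--Salem--Zygmund inequality and Lemma 3.1 in Aron et al.\ 2017, and your three steps---Theorem \ref{teo111} applied to the $(m-k)$-linear block $\ell_{p_{k+1}}^{n}\times\cdots\times\ell_{p_{m}}^{n}$ (with the index shift $i\mapsto i-k$ aligning the closed-form exponents and the constant $2^{\frac{(m-k)-1}{2}}$ matching), the lift via Lemma \ref{t2} with $S=\{k+1,\dots,m\}$ whose ``moreover'' hypotheses are verbatim the standing assumptions and hence give the unimprovable suprema $q_{1}=\cdots=q_{k}=\infty$, and the necessity clause via Lemma \ref{t0}, Kahane--Salem--Zygmund, and Lemma 3.1---are exactly that recipe, carried out with correct bookkeeping (including your closing verification that $p_{i}\leq 2(m-k)$ makes the constructed exponents obey the sum bound). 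The one caveat, which you inherit from the paper's sketch rather than introduce, is that the cited Kahane--Salem--Zygmund lemma controls the norm of the unimodular $(m-k)$-linear form by a power of $n$ in which each $p_{i}$ enters through $\max(p_{i},2)$, so when some $p_{i}\in(1,2)$ your KSZ step literally yields only $\frac{1}{q_{k+1}}+\cdots+\frac{1}{q_{m}}\leq\frac{(m-k)+1}{2}-\sum_{i=k+1}^{m}\frac{1}{\max(p_{i},2)}$, which is weaker than the stated necessary condition and would need to be supplemented (e.g.\ by the individual bounds $q_{i}\geq\frac{1}{1-(\frac{1}{p_{i}}+\cdots+\frac{1}{p_{m}})}$ from Lemma 3.1 of Aron et al.) by an argument that neither you nor the paper spells out.
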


\begin{remark}
When $k=1$ and $p_{1}=\cdots=p_{m}=m$ we recover Theorem \ref{criticodjair}.
\end{remark}

\end{document}